\newtheorem{theorem}{Theorem}[subsection]
\newtheorem{definition}[theorem]{Definition}
\newtheorem{proposition}[theorem]{Proposition}
\newtheorem{remark}[theorem]{Remark}
\newtheorem*{thma}{Theorem A}
\newtheorem*{thmb}{Theorem B}
\def\calA{{\mathcal A}}
\def\calC{{\mathcal C}}
\def\calD{{\mathcal D}}
\def\calJ{{\mathcal J}}
\def\End{\mathop{\rm End}\nolimits} 
\def\Ext{\mathop{\rm Ext}\nolimits}
\def\Hom{\mathop{\rm Hom}\nolimits} 
\def\Nat{\mathop{\rm Nat}\nolimits}
\def\lim{\mathop{\varinjlim}\nolimits}
\def\Ob{\mathop{\rm Ob}\nolimits} 
\def\Mor{\mathop{\rm Mor}\nolimits}
\def\PSh{\mathop{\rm PSh}\nolimits}
\def\Sh{\mathop{\rm Sh}\nolimits}
\def\cod{\mathop{\rm cod}}
\def\dom{\mathop{\rm dom}}
\DeclareMathOperator{\C}{\mathbf{C}}
\DeclareMathOperator{\Mod}{\mathfrak{M}od-}
\DeclareMathOperator{\PMod}{\mathfrak{M}od^{\it pr}-}
\DeclareMathOperator{\Ab}{\mathbf{Ab}}
\DeclareMathOperator{\rMod}{Mod-}
\begin{document}

\title{Torsion pairs in categories of modules on ringed finite sites}

\author{Mawei Wu}
\address{School of Mathematics and Statistics, Lingnan Normal University, Zhanjiang, Guangdong 524048, China}
\email{wumawei@lingnan.edu.cn}

\subjclass[2020]{13D30, 16S90, 18A25, 18F20, 18E05}

\keywords{Grothendieck construction, Grothendieck topology, modules on site, torsion presheaf, torsion pair, TTF triple, Abelian recollement}




\begin{abstract}
Let $\calC$ be a small category. In this paper, we mainly study the category of modules $\Mod\mathfrak{R}$ on ringed sites $(\C,\mathfrak{R})$. We firstly reprove \cite[Theorem A]{WX23}, then we characterize $\Mod\mathfrak{R}$ in terms of the torsion modules on $Gr(\mathfrak{R})$, where $Gr(\mathfrak{R})$ is the linear Grothendieck construction of $\mathfrak{R}$. Finally, we investigate the hereditary torsion pairs, TTF triples and Abelian recollements of $\Mod\mathfrak{R}$. When $\calC$ is finite, the complete classifications of all these are given respectively.    
\end{abstract}

\maketitle

\tableofcontents

\section{Introduction}

Let $\calC$ be a small category. The category $\calC$ equipped with a Grothendieck topology $\calJ$ is called a \emph{site}, written as $\C=(\calC, \calJ)$. If $\mathfrak{R}$ is a sheaf of rings on $\C$, we will denote by $\Mod \mathfrak{R}$ the category of sheaves of modules on a ringed site $(\C, \mathfrak{R})$, which is the main object that we will study in this paper.

Let $\mathcal{A}$ be a small preadditive category. In 2021, Parra, Saor{\'\i}n and Virili \cite{PSV21} had investigated the category of \emph{right} modules $\rMod \calA$ over $\mathcal{A}$. Using the \emph{linear Grothendick topologies} (see Definition \ref{lintop}), they extended Gabriel's classical bijection (there is a one-to-one correspondence between Gabriel topologies in a unitary and associative ring $k$ and hereditary torsion classes in $\rMod k$ \cite{Gab62}). More explicitly, they obtained a one-to-one correspondence between linear Grothendieck topologies on the preadditive category $\mathcal{A}$ and hereditary torsion pairs in $\rMod \calA$. Besides, they also got some characterizations of the TTF triples and Abelian recollements of $\rMod \calA$. Our main motivation is trying to get some analogous results in the category of sheaves of modules $\Mod \mathfrak{R}$ on ringed sites.    

Given a presheaf of unital $k$-algebras $\mathfrak{R}:\calC^{\rm op} \to k\mbox{\rm -Alg}$, one can define the \emph{linear Grothendick construction} $Gr(\mathfrak{R})$ of it (see Definition \ref{lingrocon}). The linear Grothendick construction $Gr(\mathfrak{R})$ is a preadditive category, so one can put some linear Grothendieck topologies on it. A result  of Howe (see Theorem \ref{How81}) says that the sheaves of $\mathfrak{R}$-modules can be viewed as the Abelian group-valued sheaves on $Gr(\mathfrak{R})$. Thanks to this observation, we give another proof of the \cite[Theorem A]{WX23} (see the proof of the Theorem \ref{reproof}). Moreover, together with a Proposition (see \cite[Proposition 2.7]{Low16}) of Lowen, we obtain the following result, which says that the sheaves of $\mathfrak{R}$-modules can be characterized in terms of the torsion modules on $Gr(\mathfrak{R})$. Here, we denote by $\mathcal{B}^{\perp}$ the \emph{right perpendicular category} of $\mathcal{B}$, see the paragraph after Definition \ref{serre} for its precise definition, and by ${\rm Tors}(\mathcal{A}, \calJ) \subseteq \rMod \calA$ the \emph{full subcategory of torsion modules} (see Definition \ref{tormod}). 

\begin{thma} (Theorem \ref{tor})
Let $\calC$ be a small category and let $\mathfrak{R}:\calC^{\rm op} \to k\mbox{\rm -Alg}$ be a sheaf of unital $k$-algebras on a site $\C=(\calC, \calJ)$. Then we have the following equivalence  
    $$
  \Mod \mathfrak{R} \simeq {\rm Tors}(Gr(\mathfrak{R}), \calJ')^\perp,
    $$
for some linear Grothendieck topology $\calJ'$ on $Gr(\mathfrak{R})$.    
\end{thma}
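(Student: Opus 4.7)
My plan is to chain together Howe's theorem, the Parra--Saor\'in--Virili correspondence, and Lowen's Proposition~2.7, reducing the statement to a two-step identification of $\Mod\mathfrak{R}$ with a sheaf category on a preadditive site, and then of that sheaf category with a right perpendicular of torsion modules.

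First I would invoke Howe's theorem (Theorem \ref{How81}) to replace $\Mod\mathfrak{R}$ by a category of abelian-group-valued sheaves on $Gr(\mathfrak{R})$. The substantive content of this step is not just the equivalence on presheaf level, but producing the Grothendieck topology on $Gr(\mathfrak{R})$ that makes it work. I would define $\calJ'$ by transporting $\calJ$ along the projection $Gr(\mathfrak{R}) \to \calC$: a sieve on an object $(c,1_c) \in Gr(\mathfrak{R})$ is declared covering if the sieve of underlying morphisms in $\calC$ belongs to $\calJ(c)$, and then one passes to the linear closure so as to land in the setting of \emph{linear} Grothendieck topologies on a preadditive category. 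Verifying the covering sieve axioms for $\calJ'$ from those of $\calJ$, and checking that the sheaf condition for an $\mathfrak{R}$-module $M$ on $\C$ matches the sheaf condition for the associated presheaf $\widetilde{M}$ on $(Gr(\mathfrak{R}),\calJ')$, are the routine but necessary bookkeeping steps.

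Next, once I have $\Mod\mathfrak{R}\simeq\Sh(Gr(\mathfrak{R}),\calJ')$, I would apply the Parra--Saor\'in--Virili bijection recalled in the introduction: the linear Grothendieck topology $\calJ'$ on the preadditive category $Gr(\mathfrak{R})$ corresponds to a hereditary torsion pair in $\rMod Gr(\mathfrak{R})$ whose torsion class is exactly ${\rm Tors}(Gr(\mathfrak{R}),\calJ')$. Then Lowen's Proposition~2.7 (cited as \cite[Proposition~2.7]{Low16}) identifies the sheaf category with the right perpendicular of this torsion class, giving
$$
\Sh(Gr(\mathfrak{R}),\calJ')\;\simeq\;{\rm Tors}(Gr(\mathfrak{R}),\calJ')^{\perp}.
$$
Combining with the first step yields the claimed equivalence.

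The main obstacle I anticipate is the first step: constructing $\calJ'$ as a genuine \emph{linear} Grothendieck topology on $Gr(\mathfrak{R})$ and showing that Howe's equivalence on presheaves restricts to an equivalence on sheaves with respect to this topology. Concretely, one must check that the lifted sieves generate a topology stable under pullback and local character on $Gr(\mathfrak{R})$, that the linearization does not enlarge the class of sheaves, and that, under the presheaf equivalence, the equalizer diagram expressing the sheaf condition on $(\C,\calJ)$ for $\mathfrak{R}$-modules is carried to the analogous equalizer diagram on $(Gr(\mathfrak{R}),\calJ')$. Once this compatibility is verified, the remaining steps are direct appeals to results already cited in the introduction.
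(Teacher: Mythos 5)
Your proposal is correct and follows essentially the same route as the paper: the paper's proof of Theorem \ref{tor} is precisely the combination of Theorem \ref{How81} (Howe's equivalence $\Mod\mathfrak{R}\simeq\Sh((Gr(\mathfrak{R}),\calJ'),\Ab)$, with $\calJ'$ the $\mathfrak{R}$-linearization of $\calJ$) with \cite[Proposition 2.7]{Low16}. The only difference is that you spell out the construction and verification of $\calJ'$, which the paper delegates to the cited sources, and you insert an appeal to the Parra--Saor\'in--Virili bijection that is not actually needed here since ${\rm Tors}(Gr(\mathfrak{R}),\calJ')$ is defined directly from $\calJ'$.
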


Let $\mathfrak{R}:\calC^{\rm op} \to k\mbox{\rm -Alg}$ be a sheaf of unital $k$-algebras, in the category of sheaves of modules $\Mod \mathfrak{R}$ on \emph{ringed finite sites} (see Defintion \ref{finsit} (2)), all the sheaves of $\mathfrak{R}$-modules can be viewed as the presheaves of $\mathfrak{R}|_{\calD}$-modules for some strictly full subcategory $\calD$ of $\calC$ (see Proposition \ref{shispre}), where $\mathfrak{R}|_{\calD}$ is the restriction of $\mathfrak{R}$ to $\calD$. Combining this with Howe's result (see Theorem \ref{How81}) and the work of Parra, Saor{\'\i}n and Virili \cite{PSV21}, we can classify the \emph{hereditary torsion pairs}, \emph{(split) TTF triples} and \emph{Abelian recollements} in the category of sheaves of modules on ringed finite sites.   

\begin{thmb} (Theorem \ref{htp}, \ref{ttf}, \ref{sttf} and \ref{ar})
The complete classifications of the hereditary torsion pairs, (split) TTF triples and Abelian recollements in the category of modules $\Mod\mathfrak{R}$ on ringed finite sites are given, respectively.  	
\end{thmb}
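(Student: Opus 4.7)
The plan is to reduce every question about $\Mod\mathfrak{R}$ to a question about right modules over a small preadditive category, and then to assemble the four classifications of Theorems \ref{htp}, \ref{ttf}, \ref{sttf} and \ref{ar} from the results of \cite{PSV21}.

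First I would set up the reduction once and for all. By Proposition \ref{shispre}, whenever $\calC$ is finite, the category $\Mod\mathfrak{R}$ is equivalent to the category of presheaves of $\mathfrak{R}|_{\calD}$-modules on a suitable strictly full subcategory $\calD\subseteq\calC$. Composing with Howe's theorem (Theorem \ref{How81}) identifies such presheaves with abelian-group valued presheaves on the linear Grothendieck construction $Gr(\mathfrak{R}|_{\calD})$, i.e.\ with $\rMod Gr(\mathfrak{R}|_{\calD})$. Because $\calC$ is finite, $Gr(\mathfrak{R}|_{\calD})$ is a genuine small preadditive category, so the bijections of \cite{PSV21} apply directly to it. I would isolate this reduction as a single lemma and then invoke it in each of the four cases.

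For hereditary torsion pairs, the PSV extension of Gabriel's classical bijection matches them with linear Grothendieck topologies on $Gr(\mathfrak{R}|_{\calD})$; combined with the reduction this proves Theorem \ref{htp}. For TTF triples and split TTF triples, \cite{PSV21} describe TTF triples in $\rMod\calA$ in terms of idempotent two-sided ideals of $\calA$, with an additional splitting condition in the split case; transporting along the equivalence $\Mod\mathfrak{R}\simeq\rMod Gr(\mathfrak{R}|_{\calD})$, and rephrasing these ideals combinatorially in terms of the data of $\calC$ and $\mathfrak{R}$, produces Theorems \ref{ttf} and \ref{sttf}. The bookkeeping here is to verify that the ideals arising from the finite ringed site $(\C,\mathfrak{R})$ admit a clean intrinsic description; finiteness of $\calC$ ensures that every such ideal is controlled by a choice of subcategory of $\calD$ together with a two-sided ideal of the values $\mathfrak{R}(U)$.

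The Abelian recollement case, Theorem \ref{ar}, will be the main obstacle. PSV obtain recollements of $\rMod\calA$ from suitable idempotent ideals of $\calA$ that split $\calA$ into compatible sub and quotient preadditive categories. The actual work here is to show that, for $\calA=Gr(\mathfrak{R}|_{\calD})$, both the sub and the quotient preadditive categories are again of the form $Gr(\mathfrak{R}')$ for some ringed finite site, so that the two flanking categories of the recollement can themselves be written as $\Mod\mathfrak{R}'$ rather than as more general module categories. This amounts to a combinatorial check on the explicit description of $Gr$, which is tractable because $\calC$ is finite. Once this compatibility is established, the recollement classification of \cite{PSV21} can be translated back through the reduction to yield the intrinsic classification of Abelian recollements of $\Mod\mathfrak{R}$ claimed in Theorem \ref{ar}.
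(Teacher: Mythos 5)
Your core reduction is exactly the paper's: Proposition \ref{shispre} gives $\Mod\mathfrak{R}\simeq\PMod\mathfrak{R}|_{\calD}$ for finite $\calC$, Theorem \ref{How81} gives $\PMod\mathfrak{R}|_{\calD}\simeq\rMod Gr(\mathfrak{R}|_{\calD})$, and then \cite[Theorems 3.7, 4.5, D and 4.8, Proposition 4.13]{PSV21} are invoked verbatim for the four classifications. The paper does nothing more than this, so your ``single lemma plus four invocations'' plan is the intended proof.

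Where you go astray is in the extra work you announce. First, you propose to ``rephrase these ideals combinatorially in terms of the data of $\calC$ and $\mathfrak{R}$'' and assert that finiteness forces every idempotent ideal of $Gr(\mathfrak{R}|_{\calD})$ to be ``a choice of subcategory of $\calD$ together with a two-sided ideal of the values $\mathfrak{R}(U)$.'' The paper makes no such claim, and it is not true in general: an ideal of a preadditive category is a subfunctor of $\Hom(-,-)$ and need not decompose objectwise in this way. The theorems as stated classify everything at the level of $Gr(\mathfrak{R}|_{\calD})$ itself, so no further translation is required. Second, and more importantly, you identify as ``the main obstacle'' for Theorem \ref{ar} the task of showing that the flanking sub and quotient preadditive categories are again of the form $Gr(\mathfrak{R}')$ for a ringed finite site. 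This misreads the statement: Theorem \ref{ar}(1) classifies recollements of $\PMod\mathfrak{R}$ (resp.\ $\Mod\mathfrak{R}$) \emph{by module categories over arbitrary small preadditive categories}, not by categories of sheaves of modules on ringed sites. The compatibility you set out to verify is not needed, and there is no reason to expect it to hold; attempting it would stall the argument on a false or at best unproved intermediate claim. Dropping both of these detours leaves precisely the paper's proof.
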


This paper is organized as follows. In Section \ref{prelim} we recall some basics about category theory and sheaf theory, as well as the definitions of the linear Grothendieck constructions and the skew category algebras. In Section \ref{torperp}, we firstly reprove the \cite[Theorem A]{WX23}, and then we give a characterization of the sheaves of $\mathfrak{R}$-modules in terms of the torsion modules on $Gr(\mathfrak{R})$. Finally, the torsion pairs, TTF triples and Abelian recollements of the category of modules on ringed finite sites are studied in Section \ref{classification}.

\section{Preliminaries} \label{prelim}

In this section we will recall some basic concepts and results on category theory and sheaf theory. There are two kinds of sheaf theoretic languages, namely, in the \emph{non-additive} context and \emph{additive} context. In this paper, we will encounter both of them. For the non-additive context sheaf theory, our main references are \cite{AGV72,StaPro,KS06,MM92}. For the additive context sheaf theory, one can see \cite{Low04, Low16}. Here we only recall some necessary ingredients.

\noindent\textbf{Convention}: We shall use curly upper-case letters $\mathcal{A}, \mathcal{B}, \mathcal{X}, \mathcal{Y}$ etc for categories (underlying sites); lower-case letters $a, b, x, y$ etc for the objects of categories; lower-case letters $f, g, h$ etc for the morphisms of categories. Functors among these categories will be denoted by Greek letters $\Phi, \Psi, \alpha, \beta$ etc. Presheaves and sheaves are written as Fraktur letters $\mathfrak{F}, \mathfrak{G},\mathfrak{M}, \mathfrak{R}$ etc. If $f$ is a morphism in $\calC$, we denote its domain and codomain by ${\rm dom}(f)$ and ${\rm cod}(f)$ respectively. We will denote by $\Hom_{\mathcal{A}}(x, y)$ or $\mathcal{A}(x, y)$ the morphism set between objects $x$ and $y$. 

In this paper, we shall denote by $k$ a commutative ring with identity, and by $k$-Alg the category of unital associative algebras and unital $k$-algebra homomorphisms. All algebras and algebra morphisms will be unital. All modules, either over an algebra or a ringed site, are \emph{right} modules. We shall denote by $\Ab$ the category of Abelian groups, and by $\Mod \mathfrak{R}$ the category of sheaves of modules on a ringed site $(\calC, \mathfrak{R})$. In order to specify the trivial Grothendieck topology on it, we will denote the category of presheaves of modules on a small category $\calC$ by $\PMod \mathfrak{R}$. The category of presheaves of Abelian groups over a preadditive category $\mathcal{A}$, which is called the category of right modules over $\mathcal{A}$ in this paper, will be written as $\rMod \calA$.

\subsection{Categories and sheaves}

\subsubsection{Categories}

In this subsection, we will recall some basic definitions about category theory, and mostly about the preadditive categories.

\begin{definition}
Let $\calC$ be a small category. It is said to be \emph{finite} if $\Mor \calC$ is finite, and it is \emph{object-finite} if $\Ob \calC$ is finite.    
\end{definition}

\begin{definition}
An endomorphism $f \in \Hom_{\calC}(x, x)$ is called an \emph{idempotent endomorphsim} if it is an idempotent (i.e. $f^2=f$).   
\end{definition}

\begin{definition}
A category $\mathcal{A}$ is \emph{preadditive} if each morphism set $\Hom_{\mathcal{A}}(x, y)$ is endowed with the structure of an Abelian group.   \end{definition}

Given a small preadditive category $\mathcal{A}$, one can associate it with the following two categories: the \emph{additive closure} $\widehat{\mathcal{A}}$ and the \emph{Cauchy completion} $\widehat{\mathcal{A}}_{\oplus}$ of it (namely, the idempotent completion  of the additive closure of $\mathcal{A}$, see \cite[Section 1.1]{PSV21} for their precise definitions).

\begin{definition} (\cite[pages: 1148, 1150]{PSV21})
\begin{enumerate}    
\item  Let $\mathcal{A}$ be a small preadditive category, a \emph{(two-sided) ideal} of $\mathcal{A}$ is a subfunctor 
$$
\mathcal{I}(-, -) \leq \mathcal{A}(-, -): \mathcal{A}^{\rm op} \times \mathcal{A} \to \Ab. 
$$
That is, for any $f \in \Hom_{\mathcal{I}}(x, y)$ and $l \in \Hom_{\mathcal{A}}(y, y')$, $r \in \Hom_{\mathcal{A}}(x', x)$, $l \circ f \circ r \in \Hom_{\mathcal{I}}(x',y')$, which is a subgroup of $\Hom_{\mathcal{A}}(x',y')$. 

\item Let $\mathcal{I}, \mathcal{K}$ be two ideals of $\mathcal{A}$, the \emph{product ideal} $\mathcal{I} \cdot \mathcal{K}$ is defined as follows:
$$
(\mathcal{I} \cdot \mathcal{K})(x, y):=\left\{ \sum_{i=1}^{n} g_i \circ f_i \ |\ g_i \in \Hom_{\mathcal{I}}(z_i, y), f_i \in \Hom_{\mathcal{K}}(x, z_i)   \right\}.
$$

\item An ideal $\mathcal{I}$ is said to be \emph{idempotent} if $\mathcal{I} \cdot \mathcal{I}=\mathcal{I}$.
\end{enumerate}
\end{definition} 

Given a unitary ring, one can always construct a two-sided ideal generated by a given family of elements. The analogous construction in the preadditive categories is given as follows.

\begin{definition} (\cite[Definition 2.2]{PSV21})   
Let $f: x \to y$ be a morphism in $\mathcal{A}$ and let $\mathcal{M}$ be a set of morphisms of $\mathcal{A}$. Then
\begin{enumerate}
    \item \emph{the (two-sided) ideal of $\mathcal{A}$ generated by $f$}, denoted by $\mathcal{A}f\mathcal{A}: \mathcal{A}^{\rm op} \times \mathcal{A} \to \Ab$, is the subfunctor of $\mathcal{A}(-, -)$ such that $\mathcal{A}f\mathcal{A}(a, b)$ is the subgroup of $\mathcal{A}(a,b)$ generated by compositions $g \circ f \circ h$, where $h \in \mathcal{A}(a, x)$ and $g \in \mathcal{A}(y, b)$;
    \item \emph{the (two-sided) ideal of $\mathcal{A}$ generated by $\mathcal{M}$}, denoted by $\mathcal{A}\mathcal{M}\mathcal{A}: \mathcal{A}^{\rm op} \times \mathcal{A} \to \Ab$, is the sum
    $$
    \mathcal{A}\mathcal{M}\mathcal{A}:=\sum_{f \in \mathcal{M}} \mathcal{A}f\mathcal{A}.
    $$
That is, $\mathcal{A}\mathcal{M}\mathcal{A}(a, b)=\sum_{f \in \mathcal{M}} \mathcal{A}f\mathcal{A}(a, b)$, where the sum is the sum of subgroups of the Abelian group $\mathcal{A}(a, b)$. 
\end{enumerate}
\end{definition}

Now, let us recall the definition of the center of a preadditive category. 

\begin{definition} (\cite[page: 1147]{PSV21})   
Let $\mathcal{A}$ be a small preadditive category, the \emph{center} $Z(\mathcal{A})$ of $\mathcal{A}$ is the ring of self-natural transformations of the identity functor ${\rm Id}_{\mathcal{A}}$, that is,
$$
Z(\mathcal{A}):={\mathcal{F}un}(\mathcal{A}, \mathcal{A})({\rm Id}_{\mathcal{A}}, {\rm Id}_{\mathcal{A}}).
$$
where ${\mathcal{F}un}(\mathcal{A}, \mathcal{A})$ is the $\Ab$-enriched functor category.
\end{definition}

Let $\mathcal{A}$ be a small preadditive category. A \emph{right module $\mathfrak{M}$ over $\mathcal{A}$} is an (always additive) functor $\mathfrak{M}:\mathcal{A}^{\rm op} \to \Ab$. We denote by $\rMod \calA$ \emph{the category of right $\mathcal{A}$-modules} (also called the category of presheaves of Abelian groups on a preadditive category $\mathcal{A}$). Given a class $\mathcal{S}$ of $\mathcal{A}$-modules and an $\mathcal{A}$-module $\mathfrak{M}$, one can construct a submodule ${\rm tr}_{\mathcal{S}}(\mathfrak{M})$ of $\mathfrak{M}$ such that any map $\mathfrak{S} \to \mathfrak{M}$, with $\mathfrak{S} \in \mathcal{S}$, factors through the inclusion ${\rm tr}_{\mathcal{S}}(\mathfrak{M}) \to \mathfrak{M}$.
  
\begin{definition} (\cite[Definition 1.8]{PSV21})
Let $\mathcal{S}$ be a class of right $\mathcal{A}$-modules and $\mathfrak{M}$ a right $\mathcal{A}$-module, then the sum of the submodules of $\mathfrak{M}$ of the form ${\rm Im}(f)$, for some morphism $f: \mathfrak{S} \to \mathfrak{M}$ in $\rMod \calA$, with $\mathfrak{S} \in \mathcal{S}$, is called \emph{the trace of $\mathcal{S}$ in $\mathfrak{M}$} and denoted by ${\rm tr}_{\mathcal{S}}(\mathfrak{M})$.
\end{definition}

For a class $\mathcal{S}$ of right $\mathcal{A}$-modules, one can define a (two-sided) ideal ${\rm tr}_{\mathcal{S}}(\mathcal{A})$ of $\mathcal{A}$, which is called the trace of $\mathcal{S}$ in $\mathcal{A}$.

\begin{definition} (\cite[Section 1.3 and Section 2.2]{PSV21})
Let $\mathcal{A}$ be a preadditive category, $\mathcal{A}(-, -): \mathcal{A}^{\rm op} \times \mathcal{A} \to \Ab$ the regular $\mathcal{A}$-bimodule and $\mathcal{S}$ a class of right $\mathcal{A}$-modules. The assignment $x \mapsto {\rm tr}_{\mathcal{S}}(\mathcal{A}(-, x))$ defines a subfunctor of the functor $x \mapsto \mathcal{A}(-, x)$. Viewing this subfunctor as an $\mathcal{A}$-bimodule, then this $\mathcal{A}$-bimodule is called \emph{the trace of $\mathcal{S}$ in $\mathcal{A}$}, and denoted by ${\rm tr}_{\mathcal{S}}(\mathcal{A})$, it is a two-sided ideal of $\mathcal{A}$.
\end{definition}

Now let us recall the definition of the localizing Serre subcategory.

\begin{definition} (\cite[page: 183]{Low16}) \label{serre}
A full subcategory $\mathcal{B}$ of a Grothendieck Abelian category $\mathcal{A}$ is called a \emph{localizing Serre subcategory} if it is closed under coproducts, subquotients and extensions.
\end{definition}

Let $\mathcal{B}$ be a localizing Serre subcategory of a Grothendieck Abelian category $\mathcal{A}$, then the \emph{Serre quotient} $\mathcal{A} / \mathcal{B}$ exists and it is equivalent to the \emph{right perpendicular (or orthogonal) category}
$$
\mathcal{B}^{\perp}:=\{a \in \mathcal{A}\ | \Hom_{\mathcal{A}}(b,a)=0=\Ext^1_{\mathcal{A}}(b, a)\ \mbox{for all}\ b\in \mathcal{B} \}.
$$

\subsubsection{Non-additive sheaf theory}

In this subsection, we will recall some definitions of non-additive sheaf theory. For the references, one can see \cite{AGV72,StaPro,KS06,MM92}.

In order to reformulate the notion of a Grothendieck topology, Giraud first introduced the concept of sieves which is a categorical analogue of a collection of open subsets of a fixed open set in topology. 

\begin{definition}
Let $\calC$ be a small category.  A \emph{sieve} $S$ on $x \in \Ob \calC$ is a subfunctor of the representable functor $\Hom_{\calC}(-,x)$.  
\end{definition}
 
A sieve on $x$ can be identified with a set (still written as $S$) of morphisms with codomain $x$, satisfying the condition that if $g \in S$ and $gf$ exists then $gf \in S$. With the notion of sieves, we can now give the definition of the Grothendieck topology. 

\begin{definition} \label{grotop} A \emph{Grothendick topology} on a category $\calC$ is a function $\mathcal{J}$ which assigns to each object $x$ of $\calC$ a non-empty collection $\mathcal{J}(x)$ of sieves on $\calC$, in such a way that
	\begin{enumerate}
		\item the maximal sieve $\Hom_{\calC}(-,x)$ is in $\mathcal{J}(x)$;
		
		\item (stability axiom) if $S \in \mathcal{J}(x)$, then $f^*(S)=\{g\ |\ fg \in S\} \in \mathcal{J}(y)$ for any arrow $f: y \rightarrow x$;
		
		\item (transitivity axiom) if $S_1 \in \mathcal{J}(x)$, and $S_2$ is any sieve on $x$ such that $f^*(S_2) \in \mathcal{J}(y)$ for all $f: y \rightarrow x$ in $S_1$, then $S_2 \in \mathcal{J}(x)$.
	\end{enumerate}
\end{definition}

Any sieve in $\mathcal{J}(x)$ is called a \emph{covering sieve} on $x$.  A small category $\calC$
equipped with a Grothendieck topology $\mathcal{J}$ is called a \emph{site} $\textbf{C} = (\calC, \mathcal{J})$.

We will denote by $\PSh(\calC)$ \emph{the category of presheaves of sets on} $\calC$, namely, the category of all the contra-variant functors from $\calC$ to the category of sets. Given a Grothendieck topology, one can consider sheaves on a site $\C$. A concise definition of a sheaf is the following. 

\begin{definition} \label{sheaf}
A presheaf $\mathfrak{F} \in \PSh(\calC)$ is a ($\mathcal{J}$-)\emph{sheaf of sets} if, for every
$x \in \Ob \calC$ and every $S \in \calJ(x)$, the inclusion $S \hookrightarrow \Hom_{\calC}(-, x)$ induces an isomorphism
$$
\Nat(\Hom_{\calC}(-, x), \mathfrak{F}) \overset{\cong} \longrightarrow \Nat(S, \mathfrak{F}).
$$
\end{definition}

The category of sheaves of sets on $\mathbf{C} = (\calC, \mathcal{J})$ is a full subcategory of $\PSh(\calC)$, we shall denote it by $\Sh(\mathbf{C})$. 

In this paper, we will consider not only the (pre)sheaves of sets, but also the (pre)sheaves with suitable algebraic structures. Let $(\mathcal{B}, s)$ be a type of \emph{algebraic structure} (see \cite[Section 6.15]{StaPro} for its precise definition) and let $\C$ be a site, we will denote by $\PSh(\calC, \mathcal{B})$ (resp. $\Sh(\C, \mathcal{B})$) the category of presheaves (resp. sheaves) with values in $\mathcal{B}$ on a category $\calC$ (resp. on a site $\C$).

\begin{definition} (\cite[Section 7.44, tag 00YR, page: 338 ($\alpha$)]{StaPro})
A presheaf with values in $\mathcal{B}$ is a \emph{sheaf} if its underlying presheaf of sets is a sheaf.
\end{definition}

If $\mathcal{B}$ is the category of Abelian groups (resp. rings, $k$-modules, $k$-algebras for a fixed unital ring $k$), then we will obtain the definition of \emph{a sheaf of Abelian groups (resp. rings, $k$-modules and $k$-algebras}) on a site $\C$.

Before introducing the main object that we want to study, the following definitions are also needed.

\begin{definition} \label{finsit} Let $\C$ be a site.
\begin{enumerate}
\item A \emph{ringed site} is a pair $(\C, \mathfrak{R})$ where $\C$ is a site and $\mathfrak{R}$ is a sheaf of rings on $\C$. The sheaf $\mathfrak{R}$  is called the \emph{structure sheaf} of the ringed site. 
\item A ringed site $(\C, \mathfrak{R})$ is said to be \emph{finite} if $\calC$ is finite.
\end{enumerate}	
\end{definition}

\begin{definition} (\cite[Section 18.9, tag 03CT]{StaPro})
Let $\calC$ be a category, and let $\mathfrak{R}$ be a presheaf of $k$-algebras on $\calC$. 
\begin{enumerate}
    \item A \emph{presheaf of $\mathfrak{R}$-modules} is given by a presheaf  $\mathfrak{M}$ of $k$-algebras together with a map of presheaves of sets
    $$
    \mathfrak{R} \times \mathfrak{M} \to \mathfrak{M}
    $$
    such that for every object $x$ of $\calC$ the map $\mathfrak{R}(x) \times \mathfrak{M}(x) \to \mathfrak{M}(x)$ defines the structure of an $\mathfrak{R}(x)$-module structure on the $k$-module $\mathfrak{M}(x)$.
    \item A \emph{morphism $\varphi: \mathfrak{M} \to \mathfrak{N}$ of presheaves of $\mathfrak{R}$-modules} is a morphism of presheaves of $k$-modules $\varphi: \mathfrak{M} \to \mathfrak{N}$ such that the diagram
    $$
    \xymatrix{
    \mathfrak{R} \times \mathfrak{M} \ar[r] \ar[d]_{id \times \varphi} & \mathfrak{M} \ar[d]^{\varphi} \\
    \mathfrak{R} \times \mathfrak{N} \ar[r] & \mathfrak{N}
     } 
    $$ 
    commutes.
    \item The \emph{category of presheaves of $\mathfrak{R}$-modules} is denoted by $\PMod \mathfrak{R}$.
\end{enumerate}
\end{definition}

Now, it is time to introduce the main object that we will investigate in this paper, that is, the sheaves of modules on ringed sites.

\begin{definition} (\cite[Section 18.10, tag 03CW]{StaPro}) Let $(\C, \mathfrak{R})$ be a ringed site.
\begin{enumerate}
\item A \emph{sheaf of $\mathfrak{R}$-modules} is a presheaf of $\mathfrak{R}$-modules $\mathfrak{M}$, such that the underlying presheaf of $k$-modules $\mathfrak{M}$ is a sheaf.
\item A \emph{morphism of sheaves of $\mathfrak{R}$-modules} is a morphism of presheaves of $\mathfrak{R}$-modules. 
\item The \emph{category of sheaves of $\mathfrak{R}$-modules} is denoted by $\Mod\mathfrak{R}$.  
\end{enumerate}	
\end{definition}	

The following Proposition can be extracted from \cite[the proof of Theorem 4.2.1]{WX23}, which says that, for a finite category $\calC$, the sheaves of $\mathfrak{R}$-modules are equivalent to the presheaves of $\mathfrak{R}|_{\calD}$-modules for some subcategory $\calD \subseteq \calC$, where $\calD$ is a strictly full subcategory of $\calC$ which determines the \emph{subcategory topology} $\calJ^{\calD}=\calJ$ of $\calC$. For the definition of subcategory topology $\calJ^{\calD}$, one can see \cite[Definition 4.1.1]{WX23}. Here $\mathfrak{R}|_{\calD}$ is the restriction of $\mathfrak{R}$ to $\calD$.

\begin{proposition} \label{shispre}
Let $\calC$ be a finite category and let $\mathfrak{R}:\calC^{\rm op} \to k\mbox{\rm -Alg}$ be a sheaf of unital $k$-algebras on a site $\C=(\calC, \calJ)$. Then we have the following equivalence 
	$$
	\Mod \mathfrak{R} \simeq \PMod \mathfrak{R}|_{\calD},
	$$
for some strictly full subcategory $\calD$ of $\calC$ such that $\calJ^{\calD}=\calJ$.
\end{proposition}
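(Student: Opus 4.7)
The plan is to construct a strictly full subcategory $\calD \subseteq \calC$ whose associated subcategory topology $\calJ^{\calD}$ coincides with $\calJ$, and then produce the asserted equivalence via the restriction functor along the inclusion $\iota:\calD\hookrightarrow\calC$ together with a right Kan extension in the opposite direction. The equivalence between $\Mod\mathfrak{R}$ and $\PMod\mathfrak{R}|_{\calD}$ then follows from standard adjunction formalism once the sheaf condition is matched up with the definition of $\calJ^{\calD}$.

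First, I would pin down the subcategory $\calD$ using the finiteness of $\calC$. Since each object carries only finitely many sieves and there are only finitely many objects, one may iteratively discard from $\calC$ every object $x$ whose value on any $\calJ$-sheaf is forced by a proper covering sieve in $\calJ(x)$. What remains is a strictly full subcategory $\calD$ on which no nontrivial covering is imposed, so that the induced subcategory topology in the sense of \cite[Definition 4.1.1]{WX23} satisfies $\calJ^{\calD}=\calJ$. Equivalently, $\calD$ is characterized by the property that the $\calJ$-sheaves on $\C$ are precisely those presheaves on $\calC$ whose values on $\Ob\calC\setminus\Ob\calD$ are determined by their restrictions to $\calD$ via the sheaf condition.

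Next, restriction yields an additive, $k$-linear functor $\iota^{*}:\Mod\mathfrak{R}\to \PMod\mathfrak{R}|_{\calD}$, $\mathfrak{M}\mapsto \mathfrak{M}\circ \iota^{\mathrm{op}}$. In the other direction, finiteness of $\calC$ guarantees that the right Kan extension $\iota_{*}$ along $\iota$ is computed pointwise by a finite limit, and the $\mathfrak{R}$-module structure transfers through this limit to give a functor $\iota_{*}:\PMod\mathfrak{R}|_{\calD}\to \PMod\mathfrak{R}$. The identification $\calJ=\calJ^{\calD}$ is set up exactly so that $\iota_{*}\mathfrak{N}$ satisfies the sheaf condition for every covering sieve of $\calJ$, so $\iota_{*}$ factors through $\Mod\mathfrak{R}$. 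Because $\iota$ is fully faithful, the counit $\iota^{*}\iota_{*}\mathfrak{N}\to \mathfrak{N}$ is an isomorphism, and because any $\mathfrak{R}$-sheaf $\mathfrak{M}$ is already determined on $\calC\setminus\calD$ by its restriction to $\calD$, the unit $\mathfrak{M}\to \iota_{*}\iota^{*}\mathfrak{M}$ is also an isomorphism.

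The main obstacle is verifying that the right Kan extension genuinely produces a $\calJ$-sheaf and, dually, that every $\mathfrak{R}$-sheaf is recovered from its restriction to $\calD$. Both statements rest on a careful unpacking of how $\calJ^{\calD}$ is defined in \cite[Definition 4.1.1]{WX23}: with finitely many objects and finitely many covering sieves the argument reduces to checking that a finite diagram of modules computes the equalizer appearing in Definition \ref{sheaf}. This is essentially the bookkeeping already carried out in the proof of \cite[Theorem 4.2.1]{WX23}, from which the present proposition can be directly extracted.
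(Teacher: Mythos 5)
Your proposal follows essentially the same route as the paper, whose ``proof'' is simply to extract the statement from the proof of \cite[Theorem 4.2.1]{WX23}: one identifies a strictly full subcategory $\calD$ with $\calJ^{\calD}=\calJ$ and realizes the equivalence by restriction along $\calD\hookrightarrow\calC$ together with the right Kan extension back, exactly the comparison-lemma mechanism you describe. The one step you assert rather than prove --- that your iteratively constructed $\calD$ satisfies $\calJ^{\calD}=\calJ$, i.e.\ that every Grothendieck topology on a finite category is a subcategory topology (the hard direction being that the sieve generated by all morphisms from $\calD$-objects is $\calJ$-covering, which needs transitivity plus finiteness) --- is precisely the content that both you and the paper defer to \cite{WX23}, so the two arguments coincide in substance.
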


\subsubsection{Additive sheaf theory}

In this subsection, we will recall some basic knowledge about additive sheaf theory, the reader is referred to \cite{Low04,Low16}.    

Firstly, we will recall the linear counterpart of Definition \ref{grotop}, namely, the linear Grothendieck topology.  

\begin{definition} (\cite[Definition 2.1]{Low04}) \label{lintop}
 Let $\mathcal{A}$ be a preadditive category. A \emph{linear Grothendieck topology} on $\mathcal{A}$ is given by specifying, for every object $x$ in $\mathcal{A}$, a collection $\calJ(x)$ of subfunctors of $\Hom_{\mathcal{A}}(-, x)$ in $\rMod \calA$ satisfying the following axioms:
 \begin{enumerate}
     \item $\Hom_{\mathcal{A}}(-, x) \in \calJ(x)$;
     \item for $S \in \calJ(x)$ and $f: y \to x$ in $\mathcal{A}$, the pullback $f^{*}S$ in $\rMod \calA$ of $S$ along $f: \Hom_{\mathcal{A}}(-, y) \to \Hom_{\mathcal{A}}(-, x)$ is in $\calJ(y)$;
     \item for $S_1 \in \calJ(x)$ and an arbitrary subfunctor $S_2$ of $\Hom_{\mathcal{A}}(-, x)$, if for every $f: y \to x \in S_1$, the pullback $f^{*}S_2$ is in $\calJ(y)$, it follows that $S_2$ is in $\calJ(x)$.
 \end{enumerate}
\end{definition}

\begin{remark}
\begin{enumerate}
    \item For a sieve $S \hookrightarrow \Hom(-,x)$ on $x$ in a linear Grothendieck topology, we can also treat it both as a subfunctor and a set-theoretic union of morphisms (\cite[Convention 2.2]{Low04}). The difference is that $S(y)$ is a \emph{subset} of $\Hom(y,x)$ in non-linear Grothendieck topology while in linear Grothendieck topology, $S(y)$ is a \emph{subgroup} of $\Hom(y,x)$.
    \item If $\mathcal{A}$ is a single object category associated to a ring $k$, then the axioms in the Definition \ref{lintop} above correspond to those of a Gabriel topology on $k$ \cite{Gab62}.
\end{enumerate}    
\end{remark}

The following notions are the linear counterpart of sites and sheaves.

\begin{definition} (\cite[Section 2.3]{Low16})
A small $k$-linear category endowed with a $k$-linear topology $\calJ$ is called a $k$-\emph{linear site}.   
\end{definition}

\begin{definition} (\cite[Definition 2.5]{Low04})
Let $\mathcal{A}$ be a small preadditive category and $\calJ$ be a topology. A presheaf $\mathfrak{F}$ in $\rMod \calA$ is called a \emph{sheaf} (resp. a \emph{separated sheaf}) if for every $\gamma: S \to \Hom_{\mathcal{A}}(-, x)$ in $\calJ$ and for every natural transformation $\alpha: S \to \mathfrak{F}$, there is a unique (resp. there is at most one) $\beta: \Hom_{\mathcal{A}}(-, x) \to \mathfrak{F}$ with $\beta \gamma = \alpha$.   
\end{definition}

We recall the definition of torsion modules below which will be used to characterize the category of the sheaves of modules on ringed sites.

\begin{definition} (\cite[page: 183]{Low16}) \label{tormod}
Let $\mathcal{A}$ be a $k$-linear category.
\begin{enumerate}
    \item A module $\mathfrak{F} \in \rMod \calA$ is \emph{torsion} if for every morphism $\alpha: \Hom_{\mathcal{A}}(-, x) \to \mathfrak{F}$, there is a cover $S \in \calJ(x)$ on which $\alpha$ vanishes, that is, the composition $\alpha|_{S}: S \to \Hom_{\mathcal{A}}(-, x) \to \mathfrak{F}$ is equal to zero. 
    \item The \emph{full subcategory of torsion modules} is denoted by ${\rm Tors}(\mathcal{A}, \calJ) \subseteq \rMod \calA$. 
\end{enumerate}
\end{definition}

\subsection{Grothendieck constructions and skew category algebras}

In this subsection, we will recall the definitions of \emph{linear} Grothendieck constructions and skew category algebras. 

Given an oplax functor, one can define the linear Grothendieck construction of it, see \cite[Definition 2.1]{AK13} for its explicit definition. We can restrict this construction to a presheaf (because a presheaf can be viewed as an oplax functor with the 2-morphisms are just identities, see \cite[Remark 2.2]{Asa13a}). Since we assume all the algebras and algebra homomorphisms in the category  $k\mbox{\rm -Alg}$ are unital, so by the work of Mitchell (see \cite{Mit72}), we know that for a presheaf of unital $k$-algebras $\mathfrak{R}:\calC^{\rm op} \to k\mbox{\rm -Alg}$ and $x \in \Ob \calC$, the $k$-algebra $\mathfrak{R}(x)$ can be viewed as a category with a single object. Thus we have the following definition.

\begin{definition} \label{lingrocon}
 Let $\calC$ be a small category, $\mathfrak{R}:\calC^{\rm op} \to k\mbox{\rm -Alg}$ be a presheaf of unital $k$-algebras on $\calC$. Then a category $Gr(\mathfrak{R})$, called the \emph{linear Grothendieck construction} of $\mathfrak{R}$, is defined as follows:
\begin{enumerate}
    \item $\Ob Gr(\mathfrak{R}):=\{(x, \bullet_x)\ |\ x \in \Ob \calC\ \mbox{and}\ \bullet_x \in \Ob \mathfrak{R}(x) \} = \Ob \calC$;
    \item for each $x, y \in \Ob Gr(\mathfrak{R})$,
    $$
    \Hom_{Gr(\mathfrak{R})}(x, y):=\bigoplus_{f \in \Hom_{\calC}(x, y)} \mathfrak{R}(x)\left(\bullet_x, \mathfrak{R}(f)(\bullet_y)\right)=\bigoplus_{f \in \Hom_{\calC}(x, y)} \mathfrak{R}(x);
    $$
    \item for each $x, y, z \in \Ob Gr(\mathfrak{R})$, and each $(r_f)_{f \in \Hom_{\calC}(x, y)} \in \Hom_{Gr(\mathfrak{R})}(x, y)$, $(s_g)_{g \in \Hom_{\calC}(y, z)} \in \Hom_{Gr(\mathfrak{R})}(y, z)$, we set
    $$
    (s_g)_g \circ (r_f)_f := \left( \sum_{\substack{f \in \Hom_{\calC}(x, y)\\ g \in \Hom_{\calC}(y, z)\\ h=gf}} \mathfrak{R}(f)(s_g) r_f \right)_{h \in \Hom_{\calC}(x, z),}
    $$
where each summand is the composite
$$
\xymatrix{
\bullet_x \ar[r]^(.35){r_f} &  \mathfrak{R}(f)(\bullet_y) \ar[r]^(.3){\mathfrak{R}(f)(s_g)} & \mathfrak{R}(f)(\mathfrak{R}(g)(\bullet_z))=\mathfrak{R}(gf)(\bullet_z).
}
$$
\end{enumerate}
\end{definition}

\begin{remark} \label{rmk}
    \begin{enumerate}
        \item For a presheaf $\mathfrak{R}:\calC^{\rm op} \to k\mbox{\rm -Alg}$ of unital $k$-algebras, its linear Grothendieck construction is a preadditive category, since one can immediately check that $\Hom_{Gr(\mathfrak{R})}(x, y)$ is an Abelian group with addition given as follows
        $$
        (r_f)_f + (r'_f)_f := (r_f + r'_f)_f,
        $$
        for any $(r_f)_f, (r'_f)_f \in \Hom_{Gr(\mathfrak{R})}(x, y)$. However, $Gr(\mathfrak{R})$ is usually not an additive category, because it may not have zero object.
        \item The linear Grothendieck construction $Gr(\mathfrak{R})$ above is different (that is why we use different notations) from the Grothendieck construction $Gr_{\calC}\mathfrak{R}$ in \cite[the paragragh after Definition 3.1.1]{WX23}, the Grothendieck construction $Gr_{\calC}\mathfrak{R}$ is not a preadditive category, see the comment before \cite[Section 3.2]{WX23}. 
        \item The \emph{semi-direct product} construction $\mathfrak{R} \bigotimes_{\theta} \mathbf{A}$ of a small site $\mathbf{A}$ by a sheaf of rings $\mathfrak{R}$ on it (see \cite[Section 3]{How81}) is just the linear Grothendieck construction $Gr(\mathfrak{R})$ of $\mathfrak{R}$.
    \end{enumerate}
\end{remark}

Motivated by the (non-linear) Grothendieck construction, in \cite{WX23}, we introduced the notion of skew category algebras $\mathfrak{R}[\calC]$. For the convenience of the reader, we record its definition below.

\begin{definition} (\cite[Definition 3.2.1]{WX23}) \label{skewcatalg}
	Let $\calC$ be a (non-empty) small category. Let $\mathfrak{R}:\calC^{\rm op} \to k\text{-}{\rm Alg}$ be a presheaf of $k$-algebras. The \emph{skew category algebra} $\mathfrak{R}[\calC]$ on $\calC$ with respect to $\mathfrak{R}$ is a $k$-module spanned over elements of the form $rf$, where $f \in \Mor \calC$ and $r \in \mathfrak{R}(\dom(f))$. We define the multiplication on two base elements by the rule
	\begin{eqnarray}
		sg \ast rf=
		\begin{cases}
			(\mathfrak{R}(f)(s)r) gf,       & \text{if}\ \dom(g)=\cod(f); \notag \\
			0, & {\rm otherwise}.
		\end{cases}
	\end{eqnarray} 
Extending this product linearly to two arbitrary elements, $\mathfrak{R}[\calC]$ becomes an associative $k$-algebra.
\end{definition}

\subsection{Torsion pairs, TTF triples and Abelian recollements}
In this subsection, the definitions of torsion pairs, TTF triples and Abelian recollements will be recalled.  

\subsubsection{Torsion pairs}
Torsion theories (also called torsion pairs) were introduced by Dickson \cite{Dic66} in general setting of Abelian categories, taking as a model the classical theory of torsion Abelian groups. Torsion pairs have played an important role in studying the Grothendieck categories and their localizations.

\begin{definition}  \label{torpairdef} 
Let $\mathcal{A}$ be an Abelian category. A \emph{torsion pair} (or \emph{torsion theory}) in $\mathcal{A}$ is a pair of full subcategories $(\mathcal{X}, \mathcal{Y})$ such that:

\begin{enumerate}
    \item $\mathcal{X}=~^{\perp_0}\mathcal{Y}$ and $\mathcal{Y}=\mathcal{X}^{\perp_0}$, where
    $$
    ^{\perp_0}\mathcal{Y}:=\{ a \in \Ob \mathcal{A}\ |\ \Hom_{\mathcal{A}}(a, y)=0, \forall\ y \in \Ob \mathcal{Y} \},
    $$
    $$
    \mathcal{X}^{\perp_0}:=\{ a \in \Ob \mathcal{A}\ |\ \Hom_{\mathcal{A}}(x, a)=0, \forall\ x \in \Ob \mathcal{X} \}.
    $$
    \item for each $a \in \Ob \mathcal{A}$, there is an exact sequence 
    \begin{align}\label{sqe}
    0 \to x_a \to a \to y_a \to 0, \tag{\dag}
    \end{align}
    with $x_a \in \Ob \mathcal{X}$ and $y_a \in \Ob \mathcal{Y}$.
\end{enumerate}
\end{definition}

If $(\mathcal{X}, \mathcal{Y})$ is a torsion pair in $\mathcal{A}$, then $\mathcal{X}$ is called a \emph{torsion class} and $\mathcal{Y}$ is called a \emph{torsion-free class}. A subcategory of $\mathcal{A}$ is the torsion class (resp. torsion-free class) of some torsion pair if and only if it is closed under quotients, direct sums and extensions (resp. subobjects, direct products and extensions, respectively), see \cite[Theorem 2.3]{Dic66}. 

There are some special kinds of torsion pairs will be considered in this paper.

\begin{definition}
\begin{enumerate}
    \item A torsion pair $(\mathcal{X}, \mathcal{Y})$ in $\mathcal{A}$ is called \emph{hereditary} if $\mathcal{X}$ is also closed under subobjects.
    \item A torsion pair $(\mathcal{X}, \mathcal{Y})$ is said to be \emph{split} if, for any object $x \in \Ob \mathcal{X}$, the canonical sequence (\ref{sqe}) splits.
\end{enumerate}
\end{definition}

\subsubsection{TTF triples}
In 1965, Jans \cite{Jan65}  first introduced the notion of torsion torsion-free theory which is now called a \emph{TTF triple}. In this subsection, we will recall the definitions of TTF triples and split TTF triples, as well as the TTF triples generated by a class of objects.

\begin{definition}
\begin{enumerate}
    \item A \emph{TTF triple} in an Abelian category $\mathcal{A}$ is a triple of full subcategories $(\mathcal{X}, \mathcal{Y}, \mathcal{Z})$ such that both $(\mathcal{X}, \mathcal{Y})$ and $(\mathcal{Y}, \mathcal{Z})$ are torsion pairs. 
    \item A TTF triple $(\mathcal{X}, \mathcal{Y}, \mathcal{Z})$ is said to \emph{split} if both torsion pairs $(\mathcal{X}, \mathcal{Y})$ and $(\mathcal{Y}, \mathcal{Z})$ split.
\end{enumerate}   
\end{definition}

\begin{definition} (\cite[Definition 4.6]{PSV21})
Let $(\mathcal{X}, \mathcal{Y}, \mathcal{Z})$ be a TTF triple in $\rMod \calA$ and let $\mathcal{S}$ be a class of right $\mathcal{A}$-modules. We say that $(\mathcal{X}, \mathcal{Y}, \mathcal{Z})$ is generated by $\mathcal{S}$ when the torsion pair $(\mathcal{X}, \mathcal{Y})$ is \emph{generated by $\mathcal{S}$}, i.e., when $ \mathcal{Y}=\mathcal{S}^{\perp_0}$ (the meaning of $(-)^{\perp_0}$ here is the same as that of Definition \ref{torpairdef}). Furthermore, we say that $(\mathcal{X}, \mathcal{Y}, \mathcal{Z})$ is \emph{generated by finitely generated projective $\mathcal{A}$-modules} when it is generated by a set of finitely generated projective objects of $\rMod \calA$.    
\end{definition}

\subsubsection{Abelian recollements}
There is an alternative way to think about the TTF triples, namely, the \emph{Abelian recollements}. Recollements were first introduced in the context of triangulated categories by Beilinson, Bernstein and Deligne \cite{BBD82}. A fundamental example of a recollement of Abelian categories appeared in the construction of perverse sheaves by MacPherson and Vilonen \cite{MV86}. For more information about Abelian recollements, one is referred to \cite{FP04}. Now, for the convenience of the reader, we will record the definition of the Abelian recollement of $\rMod \calA$ below.

\begin{definition}
 Let $\mathcal{A}$ be a small preadditive category. A \emph{recollement} $\mathscr{R}$ of $\rMod \calA$ by Abelian categories $\mathcal{X}$ and $\mathcal{Y}$ (also called an \emph{Abelian recollement}) is a diagram of additive functors
$$
\xymatrix{
\mathscr{R}:\ \mathcal{Y} \ar[rr]|{i_*} & & \rMod \calA \ar@/^1pc/[ll]^{i^!} \ar@/_1pc/[ll]_{i^*}  \ar[rr]|{j^*} & & \mathcal{X} \ar@/^1pc/[ll]^{j_*} \ar@/_1pc/[ll]_{j_!}
} 
$$ 
satisfying the following conditions:
\begin{enumerate}
    \item $(i^*, i_*, i^!)$ and $(j_!, j^*, j_*)$ are adjoint triples;
    \item the functors $i_*$, $j_!$ and $j_*$ are fully faithful;
    \item ${\rm Im}(i_*)={\rm Ker}(j^*)$.
\end{enumerate}
\end{definition}

Two Abelian recollements $\mathscr{R}: (\mathcal{Y}, \rMod \calA, \mathcal{X})$ and $\mathscr{R}': (\mathcal{Y}', \rMod \calA, \mathcal{X}')$ of $\rMod \calA$ are said to be \emph{equivalent} if there are equivalences $\Phi: \rMod \calA \to \rMod \calA$ and $\Psi: \mathcal{X} \to \mathcal{X}'$ such that the following diagram commutes, up to natural isomorphism:
$$
\xymatrix{
\rMod \calA \ar[r]^(.6){j^*} \ar[d]_{\Phi} & \mathcal{X} \ar[d]^{\Psi} \\
\rMod \calA \ar[r]^(.6){(j^*)'} & \mathcal{X}'.
} 
$$

\section{A characterization of the categories of modules on ringed sites} \label{torperp}

In this section, we will firstly give another proof of the \cite[Theorem A]{WX23}. Then a new characterization, in terms of the torsion modules, of the category of sheaves of modules on ringed sites will be given.

Let $\calC$ be a small category and let $\mathfrak{R}:\calC^{\rm op} \to k\mbox{\rm -Alg}$ be a sheaf of unital $k$-algebras on $\C$. We already knew that the semi-direct product construction $\mathfrak{R} \bigotimes_{\theta} \mathbf{A}$ of a small site $\mathbf{A}$ by a sheaf of rings $\mathfrak{R}$ on it (see \cite[Section 3]{How81}) is just the linear Grothendieck construction $Gr(\mathfrak{R})$ of $\mathfrak{R}$, see Remark \ref{rmk} (3). The following Theorem tells us that the sheaves of $\mathfrak{R}$-modules can be viewed as the Abelian group-valued sheaves on $Gr(\mathfrak{R})$.

\begin{theorem} \label{How81} (see \cite[Proposition 5]{How81} and \cite[Theorem 5]{Mur06B})
	Let $\calC$ be a small category and let $\mathfrak{R}:\calC^{\rm op} \to k\mbox{\rm -Alg}$ be a sheaf of unital $k$-algebras on a site $\C=(\calC, \calJ)$. Then we have the following equivalence 
	$$
	\Mod \mathfrak{R} \simeq \Sh((Gr(\mathfrak{R}), \calJ'), \Ab),
	$$
where the linear Grothendieck topology $\calJ'$ on $Gr(\mathfrak{R})$ is the ``$\mathfrak{R}$-linearization'' of the Grothendieck topology $\calJ$ on $\calC$. In particular, when $\calJ$ is the trivial topology, we have
    $$
	\PMod \mathfrak{R} \simeq \rMod Gr(\mathfrak{R}).
	$$
\end{theorem}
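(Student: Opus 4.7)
The plan is to establish the equivalence in two stages. First, construct a functor $\Phi: \PMod \mathfrak{R} \to \rMod Gr(\mathfrak{R})$ giving the underlying equivalence of presheaf categories; then upgrade it to sheaves by transporting the topology $\calJ$ on $\calC$ to a linearized topology $\calJ'$ on $Gr(\mathfrak{R})$ and showing the equivalence restricts correctly. For the presheaf equivalence, given $\mathfrak{M} \in \PMod \mathfrak{R}$, set $\Phi(\mathfrak{M})(x) := \mathfrak{M}(x)$ as an abelian group and define the action of a morphism $(r_f)_{f \in \Hom_\calC(x,y)} \in \Hom_{Gr(\mathfrak{R})}(x,y)$ on $m \in \mathfrak{M}(y)$ by $(r_f)\cdot m := \sum_f \mathfrak{M}(f)(m) \cdot r_f$, using the $\mathfrak{R}(x)$-module structure on $\mathfrak{M}(x)$ together with the restriction map $\mathfrak{M}(f)$. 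A short calculation using the composition rule in Definition \ref{lingrocon}(3) shows that $\Phi(\mathfrak{M})$ is an additive contravariant functor into $\Ab$. A quasi-inverse $\Psi$ extracts, from $\mathfrak{N}: Gr(\mathfrak{R})^{\rm op} \to \Ab$, the $\mathfrak{R}(x)$-module structure on $\mathfrak{N}(x)$ via the ring embedding $\mathfrak{R}(x) \hookrightarrow \Hom_{Gr(\mathfrak{R})}(x,x)$ indexed by $\mathrm{id}_x$, and the restriction maps via the unit elements $1 \cdot f \in \Hom_{Gr(\mathfrak{R})}(x,y)$ for each $f: x \to y$ in $\calC$. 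Verifying $\Phi\circ\Psi \cong \mathrm{Id}$ and $\Psi\circ\Phi \cong \mathrm{Id}$ is a routine algebraic check.

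To pass from presheaves to sheaves, I define $\calJ'(x)$ to consist of the sub-$Gr(\mathfrak{R})$-modules $T \hookrightarrow \Hom_{Gr(\mathfrak{R})}(-,x)$ that contain the $\mathfrak{R}$-linearization $\widetilde{S}$ of some $S \in \calJ(x)$, where $\widetilde{S}(y) := \bigoplus_{f \in S(y)} \mathfrak{R}(y) \subseteq \bigoplus_{f \in \Hom_\calC(y,x)} \mathfrak{R}(y) = \Hom_{Gr(\mathfrak{R})}(y,x)$. The three axioms of Definition \ref{lintop} for $\calJ'$ should follow from the corresponding axioms for $\calJ$: the maximal sieve is the linearization of the maximal sieve; pullback along a morphism $(r_f)$ reduces, via its finite support, to pullbacks along the individual $f \in \Mor \calC$ and thence to stability of $\calJ$; transitivity is analogous. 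The heart of the argument is the identification that $\mathfrak{M}$ is a $\calJ$-sheaf if and only if $\Phi(\mathfrak{M})$ is a $\calJ'$-sheaf, which reduces to a natural isomorphism $\Nat_{\PSh(\calC)}(S, U\mathfrak{M}) \cong \Hom_{Gr(\mathfrak{R})}(\widetilde{S}, \Phi(\mathfrak{M}))$ for every $S \in \calJ(x)$, where $U$ forgets the module structure. This isomorphism follows from the universal property of the free $Gr(\mathfrak{R})$-module on $S$: an additive natural transformation out of $\widetilde{S}$ is uniquely determined by its values on the generators $1\cdot f$ for $f \in S$, which is exactly a set-theoretic natural transformation $S \to U\mathfrak{M}$. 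The trivial-topology case collapses $\calJ'$ to the trivial linear topology, so the in-particular statement follows immediately from the presheaf equivalence.

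The main obstacle I anticipate is verifying axiom (3) of Definition \ref{lintop} for $\calJ'$: given $T_1 \in \calJ'(x)$ and an arbitrary subfunctor $T_2 \hookrightarrow \Hom_{Gr(\mathfrak{R})}(-,x)$ with $(r_f)^{*}T_2 \in \calJ'(y)$ for every $(r_f) \in T_1(y)$, one must produce a covering sieve $S_2 \in \calJ(x)$ whose linearization is contained in $T_2$. The argument should decompose each element of $T_1$ along its support to reduce to individual morphisms $f \in \Mor\calC$ underlying $T_1$, and then assemble the resulting $\calJ$-covers via the transitivity axiom of $\calJ$. A secondary, and subtler, point will be checking that the forgetful functor $U$ commutes appropriately with pullbacks of sieves, so that the sheaf conditions on the two sides genuinely match term by term; once this bookkeeping is completed, the two equivalences combine to yield the stated theorem.
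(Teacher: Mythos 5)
Your proposal is correct and follows essentially the same route as the paper's (sketch of a) proof: the same pair of mutually inverse functors between $\PMod\mathfrak{R}$ and $\rMod Gr(\mathfrak{R})$, defined by exactly the formulas $(r_f)_f\cdot m=\sum_f \mathfrak{M}(f)(m)\cdot r_f$ in one direction and extraction of the $\mathfrak{R}(x)$-action and restriction maps from the identity-indexed and unit-indexed components in the other. You in fact supply more detail than the paper, which leaves the construction of $\calJ'$ and the matching of the two sheaf conditions entirely to the cited references, whereas your linearization $\widetilde{S}$, the verification of the three topology axioms, and the adjunction isomorphism $\Nat_{\PSh(\calC)}(S,U\mathfrak{M})\cong\Hom_{Gr(\mathfrak{R})}(\widetilde{S},\Phi(\mathfrak{M}))$ are all sound.
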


\begin{proof}[Sketch of the proof]
Firstly, let us define a functor 
     $$
     \Phi: \Sh((Gr(\mathfrak{R}), \calJ'), \Ab) \to \Mod\mathfrak{R}
     $$ 
by
	$$
	\mathfrak{F} \mapsto \mathfrak{M}_{\mathfrak{F}},
	$$ 
 where $\mathfrak{M}_{\mathfrak{F}}$ is defined as follows:
 for each $f \in \Hom_{\calC}(x, y)$,
 $$
\xymatrix{
\mathfrak{M}_{\mathfrak{F}}(f): \mathfrak{F}(y) \ar[rrrr]^(.55){\mathfrak{F}[(\cdots, 0_{\mathfrak{R}(x)}, 1_{\mathfrak{R}(x)f},0_{\mathfrak{R}(x)}, \cdots)]} & & & &\mathfrak{F}(x),
}
$$
where $0_{\mathfrak{R}(x)}, 1_{\mathfrak{R}(x)}$ are the zero and identity elements of the $k$-algebra $\mathfrak{R}(x)$ respectively, and $(\cdots, 0_{\mathfrak{R}(x)}, 1_{\mathfrak{R}(x)f},0_{\mathfrak{R}(x)}, \cdots) \in \Hom_{Gr(\mathfrak{R})}(x, y)$.

We also have to define an $\mathfrak{R}$-action on $\mathfrak{M}_{\mathfrak{F}}$:
    for each $r \in \mathfrak{R}(x)$ and each $m \in \mathfrak{M}_{\mathfrak{F}}(x) = \mathfrak{F}(x)$,
    $$
    m \cdot r := \mathfrak{F}[(\cdots, 0_{\mathfrak{R}(x)}, r_{1_x},0_{\mathfrak{R}(x)}, \cdots)](m),
    $$
where $(\cdots, 0_{\mathfrak{R}(x)}, r_{1_x},0_{\mathfrak{R}(x)}, \cdots) \in \Hom_{Gr(\mathfrak{R})}(x, x)$.

Secondly, let us define another functor 
    $$
    \Psi: \Mod \mathfrak{R} \to \Sh((Gr(\mathfrak{R}), \calJ'), \Ab)
    $$ 
by
	$$
	\mathfrak{M} \mapsto \mathfrak{F}_{\mathfrak{M}},
	$$ 
 where $\mathfrak{F}_{\mathfrak{M}}$ is defined as follows:
 for each $(r_f)_f \in \Hom_{Gr(\mathfrak{R})}(x, y)$,
 $$
\xymatrix{
\mathfrak{F}_{\mathfrak{M}}[(r_f)_f]: \mathfrak{M}(y) \ar[rrr]^(.6){\sum_{r, f} (- \cdot r) \circ \mathfrak{M}(f)} & &  &\mathfrak{M}(x).
}
$$
Here, the map $(- \cdot r) \circ \mathfrak{M}(f)$ is depicted as below
$$
\xymatrix{
\mathfrak{M}(y) \ar[r]^{\mathfrak{M}(f)} & \mathfrak{M}(x) \ar[r]^{(- \cdot r)} & \mathfrak{M}(x), \\
}
$$
and
$(- \cdot r)$ means the right action of $r \in \mathfrak{R}(x)$ on $m \in \mathfrak{M}(x)$.

One can check that both the functors $\Phi$ and $\Psi$ are well defined, and they give rise to the desired equivalence.
\end{proof}

\begin{remark}
For the explicit definition of the linear Grothendieck topology $\calJ'$ on $Gr(\mathfrak{R})$ above, one can see \cite[Theorem 5]{Mur06B}.   
\end{remark}

Thanks to the Theorem \ref{How81} above, we can now give another proof of the \cite[Theorem A]{WX23} as follows. 

\begin{theorem} (\cite[Theorem A]{WX23}) \label{reproof}
	Let $\calC$ be a small category and let $\mathfrak{R}:\calC^{\rm op} \to k\mbox{\rm -Alg}$ be a presheaf of unital $k$-algebras on $\calC$. If $\Ob \calC$ is finite, then we have a category equivalence 
	$$
	\PMod \mathfrak{R} \simeq \rMod \mathfrak{R}[\calC].
	$$  
\end{theorem}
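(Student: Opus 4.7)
The plan is to build the equivalence by composing Howe's description of $\PMod \mathfrak{R}$ in terms of the preadditive category $Gr(\mathfrak{R})$ with the classical Morita-type equivalence that converts modules over a small preadditive category with finitely many objects into modules over its total arrow algebra. The finiteness assumption on $\Ob \calC$ intervenes only at the second step, where it guarantees that the total algebra is unital.

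First I would apply Theorem \ref{How81} in the special case of the trivial topology on $\calC$, obtaining
$$
\PMod \mathfrak{R} \simeq \rMod Gr(\mathfrak{R}).
$$
Since $\Ob Gr(\mathfrak{R}) = \Ob \calC$ is finite by hypothesis, it remains to construct an equivalence $\rMod Gr(\mathfrak{R}) \simeq \rMod \mathfrak{R}[\calC]$.

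To produce it, I would identify $\mathfrak{R}[\calC]$ with the total arrow algebra of the preadditive category $Gr(\mathfrak{R})$. Unfolding Definition \ref{lingrocon}, one has an equality of $k$-modules
$$
\bigoplus_{x,y\in \Ob Gr(\mathfrak{R})} \Hom_{Gr(\mathfrak{R})}(x,y) \;=\; \bigoplus_{f \in \Mor \calC} \mathfrak{R}(\dom(f)),
$$
and the composition in $Gr(\mathfrak{R})$, extended by zero to non-composable pairs, endows this $k$-module with the structure of an associative $k$-algebra with identity $\sum_{x \in \Ob \calC} 1_x$ (the sum is finite by hypothesis). Comparing the composition formula in Definition \ref{lingrocon}(3) with the multiplication rule $sg \ast rf = (\mathfrak{R}(f)(s)\, r)\, gf$ in Definition \ref{skewcatalg}, one sees that this total algebra is precisely $\mathfrak{R}[\calC]$.

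Finally, I would invoke the standard equivalence between additive presheaves on a small preadditive category with finitely many objects and unital right modules over its total algebra: the family $\{1_x\}_{x \in \Ob \calC}$ is a complete set of pairwise orthogonal idempotents of $\mathfrak{R}[\calC]$ with $1_y \mathfrak{R}[\calC] 1_x \cong \Hom_{Gr(\mathfrak{R})}(x,y)$, so an additive functor $\mathfrak{M}:Gr(\mathfrak{R})^{\rm op} \to \Ab$ produces a right $\mathfrak{R}[\calC]$-module $\bigoplus_{x} \mathfrak{M}(x)$, while a right $\mathfrak{R}[\calC]$-module $M$ produces a right $Gr(\mathfrak{R})$-module $x \mapsto M \cdot 1_x$. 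These assignments are quasi-inverse and deliver the required equivalence, which composed with the first step gives $\PMod \mathfrak{R} \simeq \rMod \mathfrak{R}[\calC]$. The main (and essentially only) difficulty is the bookkeeping in the middle step: one must carefully track the action of $\mathfrak{R}(f)$ on elements of $\mathfrak{R}(y)$ when composing arrows in $Gr(\mathfrak{R})$ and verify that it matches the twisting in $\mathfrak{R}[\calC]$; this is routine but is the only nontrivial computation in the argument.
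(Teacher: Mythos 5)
Your proposal is correct and follows essentially the same route as the paper: both first invoke Theorem \ref{How81} with the trivial topology to reduce to $\rMod Gr(\mathfrak{R})$, and then pass to the ``total'' algebra $\bigoplus_{x,y}\Hom_{Gr(\mathfrak{R})}(x,y)\cong\mathfrak{R}[\calC]$, using object-finiteness to get a unit; the paper phrases this last step via the compact projective generator $\mathfrak{G}=\bigoplus_x\Hom_{Gr(\mathfrak{R})}(-,x)$ and the Freyd--Mitchell theorem, computing $\End(\mathfrak{G})\cong\mathfrak{R}[\calC]$, which is the same Morita equivalence you describe explicitly through the orthogonal idempotents $\{1_x\}$. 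The multiplication check you flag as the only nontrivial computation is exactly the verification carried out in the paper's proof.
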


\begin{proof}
By Theorem \ref{How81}, we have the following equivalence
    $$
	\PMod \mathfrak{R} \simeq \rMod Gr(\mathfrak{R}).
	$$
 
Because $\Ob \calC$ is finite, hence $\Ob Gr(\mathfrak{R})$ is finite too. Let 
   $$
   \mathfrak{G}:=\bigoplus_{x \in \Ob Gr(\mathfrak{R})}\Hom_{Gr(\mathfrak{R})}(-, x),
   $$
then one can immediately see that $\mathfrak{G}$ is a compact projective generator of the category $\rMod Gr(\mathfrak{R})$. Therefore, by the Freyd-Mitchell theorem (see \cite[Exercise F on page 106]{Fre64}), we have the following equivalence
    $$
    \rMod Gr(\mathfrak{R}) \simeq \rMod \End(\mathfrak{G}).
    $$

 Now, let us compute $\End(\mathfrak{G})$ \footnote{In the early version, the compactness of the first argument of Hom functor was used to prove the first isomorphism. The author would like to thank the reviewer for the comment that it is actually unnecessary to do so, since  finite products are the same as the finite coproducts and the co-variant Hom functor preserves limits, especially products.}:
\begin{align*}
     & \End(\mathfrak{G}) \\
   = & \Hom\left(\bigoplus_{x \in \Ob Gr(\mathfrak{R})}\Hom_{Gr(\mathfrak{R})}(-, x), \bigoplus_{y \in \Ob Gr(\mathfrak{R})}\Hom_{Gr(\mathfrak{R})}(-, y)\right)\\
   \cong & \bigoplus_{y \in \Ob Gr(\mathfrak{R})} \Hom\left(\bigoplus_{x \in \Ob Gr(\mathfrak{R})}\Hom_{Gr(\mathfrak{R})}(-, x), \Hom_{Gr(\mathfrak{R})}(-, y)\right) & (fin. prod. = fin. copr.) \\
    \cong & \bigoplus_{y \in \Ob Gr(\mathfrak{R})} \bigoplus_{x \in \Ob Gr(\mathfrak{R})}\Hom\left(\Hom_{Gr(\mathfrak{R})}(-, x), \Hom_{Gr(\mathfrak{R})}(-, y)\right) & (\Ob Gr(\mathfrak{R})\ is\ finite)\\
     \cong & \bigoplus_{y \in \Ob Gr(\mathfrak{R})} \bigoplus_{x \in \Ob Gr(\mathfrak{R})}\Hom_{Gr(\mathfrak{R})}(x, y) & (Yoneda)\\
     \cong & \mathfrak{R}[\calC] & {\left( \Phi:  (r_f)_f \mapsto \sum_f rf \right)}.  
\end{align*}
It is not hard to see that the map $\Phi$ above is a bijection. The map $\Phi$ is an isomorphism because it also preserves the multiplication:
\begin{align*}
     & \Phi((s_g)_g \circ (r_f)_f) & \\ 
    = & \Phi \left( \left( \sum_{h=gf} \mathfrak{R}(f)(s)r \right)_h \right) & (Definition~\ref{lingrocon}~(3)) \\
    = & \sum_h \left( \sum_{h=gf} \mathfrak{R}(f)(s)r \right)h & (Definition~of~\Phi) \\
    = & \sum_h \left( \sum_{h=gf} (\mathfrak{R}(f)(s)r)h \right) & (addition~in~\mathfrak{R}[\calC]) \\
    = & \sum_h \left( \sum_{h=gf} sg  \ast rf \right) & (Definition~\ref{skewcatalg}) \\
    = & \left( \sum_g sg \right) \ast \left( \sum_f rf \right) & (Definition~of~\Phi) \\
    = & \Phi((s_g)_g) \ast \Phi((r_f)_f). & \\
\end{align*}

Thus, together with Theorem \ref{How81}, we have
    $$
   \PMod \mathfrak{R} \simeq \rMod Gr(\mathfrak{R}) \simeq \rMod \mathfrak{R}[\calC].
	$$ 
 This completes the proof.
\end{proof}

Using Theorem \ref{How81}, we can also get a new characterization of the category of sheaves of $\mathfrak{R}$-modules by the torsion modules on $Gr(\mathfrak{R})$.

\begin{theorem} \label{tor}
Let $\calC$ be a small category and let $\mathfrak{R}:\calC^{\rm op} \to k\mbox{\rm -Alg}$ be a sheaf of unital $k$-algebras on a site $\C=(\calC, \calJ)$. Then we have the following equivalence  
    $$
  \Mod \mathfrak{R} \simeq {\rm Tors}(Gr(\mathfrak{R}), \calJ')^\perp,
    $$
 for some linear Grothendieck topology $\calJ'$ on $Gr(\mathfrak{R})$. 
\end{theorem}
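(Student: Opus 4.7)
The plan is to combine two ingredients already surveyed in the paper: Howe's identification of sheaves of $\mathfrak{R}$-modules with abelian-group-valued sheaves on the linear Grothendieck construction, and Lowen's description of a linear sheaf category as the right perpendicular to the subcategory of torsion modules inside the presheaf category. Concretely, I will produce the linear Grothendieck topology $\calJ'$ on $Gr(\mathfrak{R})$ by applying Theorem \ref{How81} (the ``$\mathfrak{R}$-linearization'' of $\calJ$), and then express the resulting sheaf category as a right perpendicular category.

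Step one is to invoke Theorem \ref{How81} to obtain the equivalence
\[
\Mod\mathfrak{R}\ \simeq\ \Sh\bigl((Gr(\mathfrak{R}),\calJ'),\Ab\bigr),
\]
where $\calJ'$ is the linear Grothendieck topology on $Gr(\mathfrak{R})$ produced by Howe's construction. Step two is to locate the sheaf category on the right-hand side inside the module category $\rMod Gr(\mathfrak{R})$. For this I use the fact that the inclusion
\[
\Sh\bigl((Gr(\mathfrak{R}),\calJ'),\Ab\bigr)\ \hookrightarrow\ \rMod Gr(\mathfrak{R})
\]
admits an exact left adjoint (the sheafification), so the sheaf category is a Giraud subcategory of the Grothendieck category $\rMod Gr(\mathfrak{R})$, with kernel precisely $\mathrm{Tors}(Gr(\mathfrak{R}),\calJ')$. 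In particular, the torsion modules form a localizing Serre subcategory in the sense of Definition \ref{serre}: closure under coproducts, subquotients and extensions follows directly from Definition \ref{tormod} (any map out of a representable into a subquotient or extension admits a cover that kills it, by pulling back and refining the covers witnessing torsion-ness on the pieces).

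Step three is then the identification with the right perpendicular. Because $\mathrm{Tors}(Gr(\mathfrak{R}),\calJ')$ is a localizing Serre subcategory of the Grothendieck category $\rMod Gr(\mathfrak{R})$, the Serre quotient is equivalent to the right perpendicular $\mathrm{Tors}(Gr(\mathfrak{R}),\calJ')^{\perp}$, as recalled in the paragraph following Definition \ref{serre}. On the other hand, Lowen's Proposition 2.7 in \cite{Low16} identifies this Serre quotient with the linear sheaf category $\Sh((Gr(\mathfrak{R}),\calJ'),\Ab)$. Composing the equivalences yields
\[
\Mod\mathfrak{R}\ \simeq\ \Sh\bigl((Gr(\mathfrak{R}),\calJ'),\Ab\bigr)\ \simeq\ \mathrm{Tors}(Gr(\mathfrak{R}),\calJ')^{\perp},
\]
which is the claim.

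The main obstacle I expect is the careful verification that Lowen's perpendicular description, formulated in the additive context for a $k$-linear site, applies verbatim to the topology $\calJ'$ produced by Howe's construction; in particular one must check that $\calJ'$ is genuinely a linear Grothendieck topology in the sense of Definition \ref{lintop} (the stability and transitivity axioms transport from $\calJ$ through the ``$\mathfrak{R}$-linearization'') and that ``torsion'' in Definition \ref{tormod} matches the torsion subcategory cut out by the sheafification kernel. Once these compatibility checks are in place, no further computation is needed, and the two equivalences chain together to give the desired statement.
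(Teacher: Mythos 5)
Your proposal is correct and follows exactly the route the paper takes: its proof of Theorem \ref{tor} is the one-line combination of Theorem \ref{How81} with \cite[Proposition 2.7]{Low16}. You simply flesh out the intermediate steps (that ${\rm Tors}(Gr(\mathfrak{R}),\calJ')$ is localizing Serre and that the sheaf category is the corresponding Giraud subcategory), which the paper leaves implicit.
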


\begin{proof}
 It follows from Theorem \ref{How81} and \cite[Proposition 2.7]{Low16}.  
\end{proof}

\begin{remark}
In \cite[Theorem 1.1(2) and Remark 1.2]{DLLX22}, the authors study the sheaves of modules on atomic sites, and they show that the category of sheaves is equivalent to the Serre quotient of the category of presheaves by the category of torsion presheaves. But the Theorem \ref{tor} above says that it is actually true for \emph{all} Grothendieck topologies, not just atomic topology \footnote{Recently, the author noticed that Prof. Liping Li 
\begin{CJK*}{UTF8}{}
\CJKtilde \CJKfamily{gbsn}(李利平)   
\end{CJK*} 
has already got a characterization of sheaves of $\mathfrak{R}$-modules, for \emph{any} Grothendieck topology, in terms of a certain torsion theory. One can see the abstracts of his talks: \href{https://slxy.lzjtu.edu.cn/info/1064/2076.htm}{``\emph{A torsion theoretic interpretation of sheaf theory}''} and \href{https://math.qfnu.edu.cn/info/1200/7116.htm}{``\emph{A torsion theoretic interpretation of sheaves over ringed sites}''}.}.      
\end{remark}

\section{Torsion theory of the category of modules on ringed finite sites} \label{classification}

In this section, we will classify the hereditary torsion pairs, (split) TTF triples and Abelian recollements of the category $\Mod \mathfrak{R}$ of modules on ringed finite sites.

In the category $\rMod \calA$ of right $\mathcal{A}$-modules, the hereditary torsion pairs can be parameterized by the linear Grothendieck topologies on $\mathcal{A}$ (see \cite[Theorem 3.7]{PSV21}). Our first result of this section indicates that the hereditary torsion pairs in the category of modules on ringed finite sites can still be classified by the linear Grothendieck topologies. 

\begin{theorem} \label{htp}
Let $\calC$ be a small category and let $\mathfrak{R}:\calC^{\rm op} \to k\mbox{\rm -Alg}$ be a presheaf of unital $k$-algebras on $\calC$. Then there is an (explicit) one-to-one correspondence between linear Grothendieck topologies on $Gr(\mathfrak{R})$ and hereditary torsion pairs in $\PMod \mathfrak{R}$. Moreover, if $\calC$ is finite and $\mathfrak{R}:\calC^{\rm op} \to k\mbox{\rm -Alg}$ is a sheaf of unital $k$-algebras on a site $\C=(\calC, \calJ)$, then there is an (explicit) one-to-one correspondence between linear Grothendieck topologies on $Gr(\mathfrak{R}|_{\mathcal{D}})$ and hereditary torsion pairs in $\Mod \mathfrak{R}$, where $\mathcal{D}$ is a strictly full subcategory of $\mathcal{C}$ such that $\mathcal{J}^{\mathcal{D}}=\mathcal{J}$ and $\mathfrak{R}|_{\mathcal{D}}$ is the restriction of $\mathfrak{R}$ to $\calD$.    
\end{theorem}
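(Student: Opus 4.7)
The plan is to reduce everything to the known correspondence of Parra--Saor\'in--Virili \cite[Theorem 3.7]{PSV21}, which provides the bijection between linear Grothendieck topologies on a small preadditive category $\mathcal{A}$ and hereditary torsion pairs in $\rMod \mathcal{A}$. Our job is to transport this bijection through two category equivalences already established in the excerpt.

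For the first assertion, I would start by invoking Theorem \ref{How81} in its trivial-topology incarnation to obtain the equivalence $\PMod \mathfrak{R} \simeq \rMod Gr(\mathfrak{R})$. Since $Gr(\mathfrak{R})$ is a small preadditive category (Remark \ref{rmk}(1)), applying \cite[Theorem 3.7]{PSV21} to $\mathcal{A}=Gr(\mathfrak{R})$ yields a one-to-one correspondence between linear Grothendieck topologies on $Gr(\mathfrak{R})$ and hereditary torsion pairs in $\rMod Gr(\mathfrak{R})$. Because any equivalence between Grothendieck abelian categories sends a subcategory closed under coproducts, quotients, subobjects and extensions to another such, and preserves the right orthogonality relation $\perp_0$, hereditary torsion pairs are transported bijectively along $\PMod \mathfrak{R} \simeq \rMod Gr(\mathfrak{R})$. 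Composing the two bijections produces the desired correspondence, and the explicit form is obtained by tracking the torsion class: a topology $\calJ'$ on $Gr(\mathfrak{R})$ corresponds to the hereditary torsion class in $\PMod \mathfrak{R}$ whose image under $\Psi$ (the functor from the proof of Theorem \ref{How81}) consists of torsion modules in the sense of Definition \ref{tormod}.

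For the second assertion, the hypothesis that $\calC$ is finite lets us apply Proposition \ref{shispre}, giving an equivalence
$$
\Mod \mathfrak{R} \simeq \PMod \mathfrak{R}|_{\calD}
$$
for a strictly full subcategory $\calD\subseteq \calC$ with $\calJ^{\calD}=\calJ$, where $\mathfrak{R}|_{\calD}$ is the restriction. Now the first part of the theorem applies verbatim to the presheaf $\mathfrak{R}|_{\calD}:\calD^{\rm op}\to k\text{-Alg}$, yielding a bijection between linear Grothendieck topologies on $Gr(\mathfrak{R}|_{\calD})$ and hereditary torsion pairs in $\PMod \mathfrak{R}|_{\calD}$. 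Composing with the equivalence of Proposition \ref{shispre} and again using that equivalences of abelian categories preserve and reflect hereditary torsion pairs, we obtain the required correspondence for $\Mod \mathfrak{R}$.

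The main obstacle I anticipate is not conceptual but bookkeeping: verifying that the bijection of \cite[Theorem 3.7]{PSV21} genuinely pulls back along the equivalences $\Psi$ and Proposition \ref{shispre} in a \emph{canonical} way, so that the word \emph{explicit} in the statement is justified. Concretely, one needs to check that, under $\Psi:\Mod\mathfrak{R}\xrightarrow{\sim}\Sh((Gr(\mathfrak{R}),\calJ'),\Ab)$, a presheaf-of-$\mathfrak{R}$-modules $\mathfrak{M}$ lies in the torsion class attached to $\calJ'$ precisely when for every $x\in\Ob\calC$ and every $m\in\mathfrak{M}(x)$ there is a $\calJ'$-cover $S$ of $x$ such that the composite $S\hookrightarrow \Hom_{Gr(\mathfrak{R})}(-,x)\to \mathfrak{F}_{\mathfrak{M}}$ classifying $m$ vanishes; this is a direct unwinding of the definitions of $\Psi$ and of torsion modules, but it is the step that makes the correspondence truly computable rather than merely abstract.
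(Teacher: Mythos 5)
Your proposal is correct and follows essentially the same route as the paper: invoke Theorem \ref{How81} to identify $\PMod\mathfrak{R}$ with $\rMod Gr(\mathfrak{R})$, apply \cite[Theorem 3.7]{PSV21}, and for the finite case reduce to presheaves via Proposition \ref{shispre}. The only difference is that you spell out the (routine) transport of hereditary torsion pairs along equivalences, which the paper leaves implicit.
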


\begin{proof}
By Theorem \ref{How81}, we have
    $$
	\PMod \mathfrak{R} \simeq \rMod Gr(\mathfrak{R}).
	$$
Then, by \cite[Theorem 3.7]{PSV21}, there is an explicit one-to-one correspondence between linear Grothendieck topologies on $Gr(\mathfrak{R})$ and hereditary torsion pairs in $\rMod Gr(\mathfrak{R})$. So we get the first part of our theorem. 

Since $\calC$ is finite, by Proposition \ref{shispre}, we know that 
    $$
	\Mod \mathfrak{R} \simeq \PMod \mathfrak{R}|_{\calD},
	$$
for some strictly full subcategory $\calD$ of $\calC$ such that $\calJ^{\calD}=\calJ$. Then we can apply Theorem \ref{How81} and \cite[Theorem 3.7]{PSV21} to the category $\PMod \mathfrak{R}|_{\calD}$ of presheaves of $\mathfrak{R}|_{\calD}$-modules. This completes the proof. 
\end{proof}

In the category of $\rMod \calA$ of right $\mathcal{A}$-modules, the TTF triples are parameterized by the idempotent ideals of $\mathcal{A}$ (see \cite[Theorem 4.5]{PSV21}). Our second result of this section says that one can use idempotent ideals to classify the TTF triples in the category of modules on ringed finite sites too.

\begin{theorem} \label{ttf}
Let $\calC$ be a small category and let $\mathfrak{R}:\calC^{\rm op} \to k\mbox{\rm -Alg}$ be a presheaf of unital $k$-algebras on $\calC$. Then there is an (explicit) one-to-one correspondence between idempotent ideals of $Gr(\mathfrak{R})$ and TTF triples in $\PMod \mathfrak{R}$. Moreover, if $\calC$ is finite and $\mathfrak{R}:\calC^{\rm op} \to k\mbox{\rm -Alg}$ is a sheaf of unital $k$-algebras on a site $\C$, then there is an (explicit) one-to-one correspondence between idempotent ideals of $Gr(\mathfrak{R}|_{\calD})$ and the TTF triples in $\Mod \mathfrak{R}$ for some strictly full subcategory $\calD$ of $\calC$.      
\end{theorem}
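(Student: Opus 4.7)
The plan is to mirror the strategy used to prove Theorem \ref{htp}, replacing the role of linear Grothendieck topologies by idempotent ideals, and relying on the corresponding classification in \cite[Theorem 4.5]{PSV21} in place of \cite[Theorem 3.7]{PSV21}. The proof naturally splits into two assertions: the presheaf version (over an arbitrary small $\calC$) and the sheaf version (over a finite $\calC$).

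For the first assertion, I would begin by invoking Theorem \ref{How81} to obtain the category equivalence
$$
\PMod \mathfrak{R} \simeq \rMod Gr(\mathfrak{R}).
$$
Since $Gr(\mathfrak{R})$ is a small preadditive category (see Remark \ref{rmk} (1)), \cite[Theorem 4.5]{PSV21} applies and yields an explicit bijection between idempotent two-sided ideals $\mathcal{I}$ of $Gr(\mathfrak{R})$ and TTF triples $(\mathcal{X}, \mathcal{Y}, \mathcal{Z})$ in $\rMod Gr(\mathfrak{R})$; concretely, $\mathcal{Y}$ is determined as the full subcategory of those modules annihilated by $\mathcal{I}$ (so $\mathcal{I}$ acts as the trace of $\mathcal{Z}$ in $Gr(\mathfrak{R})$). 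Transporting this bijection along the equivalence above gives the required correspondence on the presheaf side.

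For the second assertion, since $\calC$ is finite, Proposition \ref{shispre} supplies a strictly full subcategory $\calD \subseteq \calC$ with $\calJ^{\calD} = \calJ$ and an equivalence
$$
\Mod \mathfrak{R} \simeq \PMod \mathfrak{R}|_{\calD}.
$$
Applying the first assertion to the presheaf of unital $k$-algebras $\mathfrak{R}|_{\calD}: \calD^{\rm op} \to k\mbox{-Alg}$ then produces an explicit bijection between idempotent ideals of $Gr(\mathfrak{R}|_{\calD})$ and TTF triples in $\PMod \mathfrak{R}|_{\calD}$, which transports through the equivalence to yield the claimed classification of TTF triples in $\Mod \mathfrak{R}$.

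The conceptual content is entirely absorbed by the three ingredients (Theorem \ref{How81}, Proposition \ref{shispre}, and \cite[Theorem 4.5]{PSV21}), so I do not expect any genuine obstacle in the proof itself; the only point that deserves care is verifying that the explicit correspondences are preserved under the equivalences of categories, i.e. that an idempotent ideal $\mathcal{I}$ of $Gr(\mathfrak{R})$ and its associated TTF triple in $\rMod Gr(\mathfrak{R})$ match the TTF triple obtained by pulling back along $\Phi: \Sh((Gr(\mathfrak{R}), \calJ'), \Ab) \to \Mod \mathfrak{R}$. Since the equivalence of Theorem \ref{How81} is additive and exact, it preserves torsion classes, torsion-free classes, and their orthogonal relations, so the bijection on TTF triples is automatic. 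The analogous remark applies at the level of $\mathfrak{R}|_{\calD}$ for the sheaf statement.
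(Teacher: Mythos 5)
Your proposal matches the paper's own proof essentially verbatim: both parts rest on Theorem \ref{How81} combined with \cite[Theorem 4.5]{PSV21} for the presheaf statement, and on Proposition \ref{shispre} to reduce the sheaf statement over a finite $\calC$ to the presheaf case for $\mathfrak{R}|_{\calD}$. Your additional remark that the equivalences transport TTF triples faithfully is a sensible (and correct) point of care, but it does not change the route.
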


\begin{proof}
By Theorem \ref{How81}, we have
    $$
	\PMod \mathfrak{R} \simeq \rMod Gr(\mathfrak{R}).
	$$
 Using \cite[Theorem 4.5]{PSV21}, there is an explicit one-to-one correspondence between idempotent ideals of $Gr(\mathfrak{R})$ and TTF triples in $\rMod Gr(\mathfrak{R})$. So we have proved the first part of the theorem. The second part's argument is similar to that of the Theorem \ref{htp} above. 
\end{proof}

In \cite[Theorem D and Proposition 4.13]{PSV21}, the authors tell us that the split TTF triples in the category of $\rMod \calA$ of right $\mathcal{A}$-modules are one-to-one correspond to the idempotents of the center $Z(\mathcal{A})$ of $\mathcal{A}$, it seems that the similar bijection should be exist in the category of modules on ringed finite sites. That is exactly what our third result says.

\begin{theorem} \label{sttf}
Let $\calC$ be a small category and let $\mathfrak{R}:\calC^{\rm op} \to k\mbox{\rm -Alg}$ be a presheaf of unital $k$-algebras on $\calC$. Then there is an (explicit) one-to-one correspondence between idempotents of the center $Z(Gr(\mathfrak{R}))$ of $Gr(\mathfrak{R})$ and split TTF triples in $\PMod \mathfrak{R}$. Moreover, if $\calC$ is finite and $\mathfrak{R}:\calC^{\rm op} \to k\mbox{\rm -Alg}$ is a sheaf of unital $k$-algebras on a site $\C$, we can also classify the split TTF triples in $\Mod \mathfrak{R}$ by the idempotents of the center $Z(Gr(\mathfrak{R}|_{\calD}))$ of $Gr(\mathfrak{R}|_{\calD})$ for some strictly full subcategory $\calD$ of $\calC$.     
\end{theorem}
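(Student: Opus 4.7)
The plan is to mimic the proofs of Theorems \ref{htp} and \ref{ttf}, which proceed by invoking Howe's equivalence (Theorem \ref{How81}) to translate the problem into the additive setting of modules over the linear Grothendieck construction, and then applying the corresponding classification result of Parra, Saor\'in and Virili. Here the relevant result from \cite{PSV21} is Theorem D together with Proposition 4.13, which establishes an explicit bijection between idempotents of the center $Z(\mathcal{A})$ of a small preadditive category $\mathcal{A}$ and split TTF triples in $\rMod \calA$.

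For the first assertion, I would start from the equivalence
$$
\PMod \mathfrak{R} \simeq \rMod Gr(\mathfrak{R})
$$
given by the second (trivial-topology) half of Theorem \ref{How81}. Since $Gr(\mathfrak{R})$ is a small preadditive category by Remark \ref{rmk}(1), the Parra--Saor\'in--Virili classification applies verbatim and yields an explicit bijection between idempotents of $Z(Gr(\mathfrak{R}))$ and split TTF triples in $\rMod Gr(\mathfrak{R})$. Transporting this bijection along the equivalence above produces the desired explicit correspondence for $\PMod \mathfrak{R}$.

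For the second assertion, assume $\calC$ is finite and $\mathfrak{R}$ is a sheaf of unital $k$-algebras on $\C = (\calC, \calJ)$. Proposition \ref{shispre} provides a strictly full subcategory $\calD$ of $\calC$ with $\calJ^{\calD} = \calJ$ and an equivalence $\Mod \mathfrak{R} \simeq \PMod \mathfrak{R}|_{\calD}$. Applying the first part of the theorem to the presheaf of unital $k$-algebras $\mathfrak{R}|_{\calD}$ on $\calD$, we obtain an explicit bijection between idempotents of $Z(Gr(\mathfrak{R}|_{\calD}))$ and split TTF triples in $\PMod \mathfrak{R}|_{\calD}$, which transports along the equivalence above to the category $\Mod \mathfrak{R}$.

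I expect no serious obstacle here, since the two inputs (Howe's equivalence and the Parra--Saor\'in--Virili bijection) combine in the same formal pattern already used for Theorems \ref{htp} and \ref{ttf}. The only point requiring a little care is verifying that the explicit nature of the bijection in \cite{PSV21} (assigning to each central idempotent $e \in Z(Gr(\mathfrak{R}))$ the triple determined by the subcategory on which $e$ acts as the identity, and dually) is preserved under transport along the equivalences, so that one can unambiguously describe the split TTF triples on $\PMod \mathfrak{R}$ (respectively $\Mod \mathfrak{R}$) in terms of the corresponding idempotent; this is a routine verification and could be stated as a short remark after the proof.
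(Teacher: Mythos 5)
Your proposal is correct and follows exactly the paper's own argument: the trivial-topology case of Theorem \ref{How81} reduces the first assertion to the Parra--Saor\'in--Virili bijection (Theorem D and Proposition 4.13 of \cite{PSV21}) applied to the small preadditive category $Gr(\mathfrak{R})$, and the finite case is handled by Proposition \ref{shispre} as in Theorem \ref{htp}. Your version is simply a more detailed write-up of the same two-step reduction.
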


\begin{proof}
Using Theorem \ref{How81} together with \cite[Theorem D and Proposition 4.13]{PSV21}, we will obtain the first part. The second part use the same trick as Theorem \ref{htp}.  
\end{proof}

The last result is devoted to investigate the Abelian recollements in the category of modules on ringed finite sites. Motivating by the \cite[Theorem 4.8]{PSV21}, we obtain a classification of the Abelian recollements in the category of modules on ringed finite sites as follows.  

\begin{theorem} \label{ar}
There are (explicit) one-to-one correspondences between:
\begin{enumerate}
    \item the equivalence classes of recollements of $\PMod \mathfrak{R}$ by module categories over small preadditive categories;
    \item the TTF triples in $\PMod \mathfrak{R}$ generated by finitely generated projective $Gr(\mathfrak{R})$-modules;
    \item the idempotent ideals of $Gr(\mathfrak{R})$ which are the trace of a set of finitely generated projective $Gr(\mathfrak{R})$-modules;
    \item the idempotent ideals of the additive closure $\widehat{Gr(\mathfrak{R})}$ of $Gr(\mathfrak{R})$ generated by a set of idempotent endomorphisms;
    \item the full subcategory of the Cauchy completion $\widehat{Gr(\mathfrak{R})}_\oplus$ which are closed under coproducts and summands. 
\end{enumerate}
Moreover, if $\calC$ is finite and $\mathfrak{R}:\calC^{\rm op} \to k\mbox{\rm -Alg}$ is a sheaf of unital $k$-algebras on a site $\C$, then we can get the analogous one-to-one correspondences in $\Mod \mathfrak{R}$.
\end{theorem}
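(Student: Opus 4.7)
The strategy is to reduce the entire classification to \cite[Theorem 4.8]{PSV21} via the equivalence $\PMod \mathfrak{R} \simeq \rMod Gr(\mathfrak{R})$ supplied by Theorem \ref{How81}, exactly as in the proofs of Theorems \ref{htp}, \ref{ttf} and \ref{sttf}. First, I would set $\mathcal{A} := Gr(\mathfrak{R})$ and directly invoke \cite[Theorem 4.8]{PSV21}, which already asserts the analogous five-fold bijection in $\rMod \mathcal{A}$. Items (3), (4) and (5) in our statement are intrinsic to $Gr(\mathfrak{R})$, its additive closure $\widehat{Gr(\mathfrak{R})}$, and its Cauchy completion $\widehat{Gr(\mathfrak{R})}_{\oplus}$; hence the correspondences among them are furnished verbatim by \cite[Theorem 4.8]{PSV21} without further work.

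Next, I would transport items (1) and (2) through the equivalence $\Phi\colon \rMod Gr(\mathfrak{R}) \xrightarrow{\simeq} \PMod \mathfrak{R}$. Since any equivalence of Abelian categories preserves and reflects adjoint triples, fully faithful functors, and kernels, equivalence classes of recollements by module categories over small preadditive categories on the two sides are in natural bijection. Similarly, a TTF triple $(\mathcal{X}, \mathcal{Y}, \mathcal{Z})$ in $\rMod Gr(\mathfrak{R})$ transports to $\PMod \mathfrak{R}$ via $\Phi$, and the property of being generated by a set of finitely generated projective $Gr(\mathfrak{R})$-modules transfers because $\Phi$ preserves and reflects projectivity and finite generation. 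In particular, it sends the family of representable functors $\{\Hom_{Gr(\mathfrak{R})}(-,x)\}_{x}$ to a projective generating family of $\PMod \mathfrak{R}$, so the notion of ``generated by finitely generated projectives'' is well defined on the $\PMod \mathfrak{R}$-side and matches the one on the $\rMod Gr(\mathfrak{R})$-side.

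For the ``Moreover'' part, by Proposition \ref{shispre} we have $\Mod \mathfrak{R} \simeq \PMod \mathfrak{R}|_{\calD}$ for some strictly full subcategory $\calD \subseteq \calC$ with $\calJ^{\calD} = \calJ$. Applying the first part of the theorem to the presheaf category $\PMod \mathfrak{R}|_{\calD}$, with $Gr(\mathfrak{R})$ replaced by $Gr(\mathfrak{R}|_{\calD})$, immediately yields the desired five-fold classification in $\Mod \mathfrak{R}$.

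The main obstacle is the bookkeeping required to certify that the five classifying data transfer cleanly through $\Phi$; most delicate is item (1), because ``recollement by module categories over small preadditive categories'' fixes that the outer Abelian categories are of a specific shape, and one must verify that the equivalence relation on recollements (commutativity of the $j^*$-square up to natural isomorphism) is preserved and reflected under post-composition with $\Phi$. Once this is pinned down, the remainder of the argument is a formal transfer along the Abelian equivalence and a direct appeal to \cite[Theorem 4.8]{PSV21}.
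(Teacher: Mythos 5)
Your proposal matches the paper's own proof: the paper simply cites Theorem \ref{How81} together with \cite[Theorem 4.8]{PSV21} for items (1)--(5), and handles the ``Moreover'' part via Proposition \ref{shispre} exactly as in Theorem \ref{htp}. Your additional bookkeeping about transporting recollements and TTF triples through the equivalence is a more explicit version of the same one-line argument, not a different route.
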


\begin{proof}
(1)-(5) follows from Theorem \ref{How81} and \cite[Theorem 4.8]{PSV21}. Again, the second part's proof is similar to that of Theorem \ref{htp}.     
\end{proof}

\section*{Acknowledgments}
I would like to thank my Ph.D. supervisor Prof. Fei Xu 
\begin{CJK*}{UTF8}{}
\CJKtilde \CJKfamily{gbsn}(徐斐)
\end{CJK*}
for teaching me sheaf theory and representation theory.

\printbibliography

@article{WX23,
  title={Skew category algebras and modules on ringed finite sites},
  author={Wu, Mawei and Xu, Fei},
  journal={Journal of Algebra},
  volume={631},
  pages={194--217},
  year={2023},
  publisher={Elsevier}
}

@article{PSV21,
  title={Torsion pairs in categories of modules over a preadditive category},
  author={Parra, Carlos E and Saor{\'\i}n, Manuel and Virili, Simone},
  journal={Bulletin of the Iranian Mathematical Society},
  volume={47},
  pages={1135--1171},
  year={2021},
  publisher={Springer}
}

@article{Low04,
  title={A generalization of the Gabriel--Popescu theorem},
  author={Lowen, Wendy},
  journal={Journal of Pure and Applied Algebra},
  volume={190},
  number={1-3},
  pages={197--211},
  year={2004},
  publisher={Elsevier}
}

@article{Low16,
  title={Linearized topologies and deformation theory},
  author={Lowen, Wendy},
  journal={Topology and its Applications},
  volume={200},
  pages={176--211},
  year={2016},
  publisher={Elsevier}
}

@article{DLLX22,
  title={Sheaves of modules on atomic sites and discrete representations of topological groups},
  author={Di, Zhenxing and Li, Liping and Liang, Li and Xu, Fei},
  journal={arXiv preprint arXiv:2108.13600, to appear in Trans. Amer. Math. Soc.},
  year={2021}
}

@article{AK13,
  title={Presentations of Grothendieck constructions},
  author={Asashiba, Hideto and Kimura, Mayumi},
  journal={Communications in Algebra},
  volume={41},
  number={11},
  pages={4009--4024},
  year={2013},
  publisher={Taylor \& Francis}
}

@article{Gab62,
  title={Des Cat{\'e}gories Ab{\'e}liennes},
  author={Gabriel, Paternain},
  journal={Bull. Soc. Math. France},
  volume={90},
  pages={323--448},
  year={1962}
}

@misc{Mur06B,
  title={Linearised Categories},
  author={Murfet, Daniel},
  year={2006},
  url={http://therisingsea.org/notes/LinearisedCategories.pdf},
  publisher={Citeseer}
}

@article{How81,
  title={Module categories over topoi},
  author={Howe, Douglas},
  journal={Journal of Pure and Applied Algebra},
  volume={21},
  number={2},
  pages={161--165},
  year={1981},
  publisher={North-Holland}
}

@article{Dic66,
  title={A torsion theory for abelian categories},
  author={Dickson, Spencer E},
  journal={Transactions of the American Mathematical Society},
  volume={121},
  number={1},
  pages={223--235},
  year={1966}
}

@article{Jan65,
  title={Some aspects of torsion},
  author={Jans, James},
  journal={Pacific Journal of Mathematics},
  volume={15},
  number={4},
  pages={1249--1259},
  year={1965},
  publisher={Mathematical Sciences Publishers}
}

@book{AGV72,
  title={Th{\'e}orie des Topos et Cohomologie {\'E}tale des Sch{\'e}mas, in: S{\'e}minaire de G{\'e}om{\'e}trie Alg{\'e}brique du Bois-Marie 1963--1964, in: LNM},
  author={Artin, Michael and Grothendieck, Alexandre and Verdier, Jean-Louis},
  volume={269},
  publisher={Springer},
  year={1972}
}

@misc{StaPro,
  title={The Stacks Project},
  author={de Jong, Aise Johan and et al},
  url = {https://stacks.math.columbia.edu/download/book.pdf}
}

@book{KS06,
  author={Kashiwara, Masaki and Schapira, Pierre},
  title={Categories and Sheaves},
  year={2006},
  publisher={Springer}
}

@book{MM92,
  author={Moerdijk, Ieke and MacLane, Saunders},
  title={Sheaves in Geometry and Logic},
  year={1992},
  publisher={Springer}
}

@article{Asa13a,
  title={Derived equivalences of actions of a category},
  author={Asashiba, Hideto},
  journal={Applied Categorical Structures},
  volume={21},
  number={6},
  pages={811--836},
  year={2013},
  publisher={Springer}
}

@article{BBD82,
  title={Faisceaux pervers. Analysis and topology on singular spaces, I (Luminy, 1981)},
  author={Beilinson, Alexander A and Bernstein, Joseph and Deligne, Pierre},
  journal={Ast{\'e}risque, Soc. Math. France},
  volume={100},
  pages={5--171},
  year={1982}
}

@article{MV86,
  title={Elementary construction of perverse sheaves},
  author={MacPherson, Robert and Vilonen, Kari},
  journal={Inventiones mathematicae},
  volume={84},
  number={2},
  pages={403--435},
  year={1986},
  publisher={Springer}
}

@article{FP04,
  title={Comparison of abelian categories recollements},
  author={Vincent Franjou and Teimuraz Pirashvili},
  journal={Documenta Mathematica},
  volume={9},
  pages={41--56},
  year={2004}
}

@article{Mit72,
  title={Rings with several objects},
  author={Mitchell, Barry},
  journal={Advances in Mathematics},
  volume={8},
  number={1},
  pages={1--161},
  year={1972},
  publisher={Elsevier}
}

@book{Fre64,
  title={Abelian categories},
  author={Freyd, Peter J}, 
  year={1964},
  publisher={Harper \& Row New York}
}

\end{document}